\newtheorem{prop}{Proposition}
\newtheorem{theo}[prop]{Theorem}
\newtheorem*{theo*}{Theorem}
\newtheorem{lemm}[prop]{Lemma}
\newtheorem{coro}[prop]{Corollary}
\theoremstyle{definition}
\newtheorem{rema}[prop]{Remark}
\newtheorem{quest}[prop]{Question}
\newcommand{\RR}{\mathbf{R}}
\newcommand{\bphi}{\mathbf{\Phi}}
\newcommand{\cA}{\mathcal A}
\newcommand{\cH}{\mathcal H}
\newcommand{\cQ}{\mathcal Q}
\DeclareMathOperator{\tr}{tr}
\DeclareMathOperator{\Id}{Id}
\DeclareMathOperator{\Lip}{Lip}
\DeclareMathOperator{\diam}{diam}
\DeclareMathOperator{\Vol}{Vol}
\DeclareMathOperator{\dist}{dist}
\DeclareMathOperator{\Ric}{Ric}
\DeclareMathOperator{\Div}{div}
\newcommand{\bangle}[1]{\left\langle #1 \right\rangle}
\newcommand{\eps}{\varepsilon}
\begin{document}

\title{Stable anisotropic minimal hypersurfaces in $\mathbf{R}^{4}$}
\author{Otis Chodosh}
\address{Department of Mathematics, Stanford University, Building 380, Stanford, CA 94305, USA}
\email{ochodosh@stanford.edu}
\author{Chao Li}
\address{Courant Institute, New York University, 251 Mercer St, New York, NY 10012, USA}
\email{chaoli@nyu.edu}

\maketitle

\begin{abstract}

We show that a complete, two-sided, stable immersed anisotropic minimal hypersurface in $\RR^4$ has intrinsic cubic volume growth, provided the parametric elliptic integral is $C^2$-close to the area functional. We also obtain an interior volume upper bound for stable anisotropic minimal hypersurfaces in the unit ball. We can estimate the constants explicitly in all of our results.

In particular, this paper gives an alternative proof of our recent stable Bernstein theorem for minimal hypersurfaces in $\RR^4$. The new proof is more closely related to techniques from the study of strictly positive scalar curvature.

\end{abstract}

\section{Introduction}
Consider $\Phi:\RR^{n+1} \setminus\{0\} \to (0,\infty)$ a $1$-homogeneous  $C^{3}_{\textrm{loc}}$ function (i.e., $\Phi(sv) = s\Phi(v)$ for $s>0$). For $M^{n}\to\RR^{n+1}$ a two-sided immersion (with chosen unit normal field $\nu(x)$), we can define the \emph{anisotropic area functional} 
\[\bphi(M) = \int_M \Phi(\nu(x)) \, d\mu.\]
 Surfaces minimizing the $\bphi$-functional arise as the equilibrium shape of crystalline\footnote{We note that in the crystalline setting $\Phi$ is usually only Lipschitz continuous.} materials, as well as scaling limits of Ising and percolation models (see \cite[Chapter 5]{Cerf2006wulff}).

We say that $M$ is \emph{$\bphi$-stationary} if $\tfrac{d}{dt}\big|_{t=0} \bphi(M_{t}) = 0$ for all compactly supported variations of $M$ (fixing $\partial M$) and that $M$ is \emph{$\bphi$-stable} if  in addition $\tfrac{d^{2}}{dt^{2}}\big|_{t=0} \bphi(M_{t}) \geq 0$ for such variations. Note that if $\Phi(v) = |v|$, $\bphi$ reduces to the $n$-dimensional area functional and a $\bphi$-stable hypersurface is known as a \emph{stable minimal hypersurface}. We say that $\bphi$ is \emph{elliptic} if the $\Phi$-unit ball $\Phi^{-1}((0,1]) \cup\{0\}$ is uniformly convex.

This article is motivated by the following questions:
\begin{quest}\label{quest:stable-bern}
For an anisotropic elliptic functional $\bphi$, is the flat hyperplane $\RR^{n}\subset \RR^{n+1}$ the only complete two-sided $\bphi$-stationary and stable immersion in $\RR^{n+1}$?
\end{quest}

\begin{quest}\label{quest:stable-volume-growth}
If $M^{n}\to\RR^{n+1}$ is a complete two-sided $\bphi$-stationary and stable immersion (for some anisotropic elliptic functional $\bphi$) does $M$ satisfy the intrinsic polynomial volume growth condition $\Vol(B_{M}(p,\rho)) \leq C \rho^{n}$?
\end{quest}

By a well-known blowup argument, an affirmative answer to Question \ref{quest:stable-bern} yields a priori interior curvature estimates for $\bphi$-stable immersions with boundary, and even for stable immersion with respect to a \emph{parametric elliptic integrand} (where $\Phi$ is allowed to also depend on $x$). We also note that for minimal surfaces one can derive \emph{lower} polynomial growth bounds (both intrinsic and extrinsic), but for general $\bphi$-stationary surfaces no monotonicity type formula is known, cf.\ \cite{Allard:characterization,DPDRH:area-blowup,DPDRG:rect}. (On the other hand, stability can be used to derive a \emph{lower} volume growth estimate; see Corollary \ref{coro:lower-vol-growth} and \cite{DPDRH:area-blowup}.) 

For the area functional, Question \ref{quest:stable-bern} (and thus Question \ref{quest:stable-volume-growth}) has been completely resolved in the affirmative when $n=2$ (independently) by Fischer-Colbrie--Schoen, do Carmo--Peng, and Pogorelov \cite{Fischer-ColbrieSchoen1980structure,docarmo-peng,pogorelov} (see also \cite{Schoen:estimates}) and recently when $n=3$ by the authors \cite{ChodoshLi:R4}. In particular, we recall the result of Pogorelov (yielding a localized volume growth estimate)
\begin{theo}[{\cite{pogorelov}, cf.\ \cite[Lemma 34]{White:notes}, \cite[Theorem 2]{MunteanuSungWang}}]\label{theo:pog}
Suppose that $M^{2}\to\RR^{3}$ is a stable minimal immersion so that the intrinsic ball $B_{M}(p,R) \subset M$ has compact closure in $M$ and is topologically a disk. Then 
\[
|B_{M}(p,\rho)| \leq \frac{4}{3} \pi \rho^{2}. 
\]
\end{theo}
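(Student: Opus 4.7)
My plan is to combine three classical ingredients for a two-dimensional minimal surface in $\RR^{3}$: the Gauss equation, the stability inequality, and the Gauss--Bonnet formula. Since $\kappa_{1}+\kappa_{2}=0$ on a minimal surface, the Gauss equation gives $K_{M}=-\tfrac{1}{2}|A|^{2}$. Stability provides $\int_{M}|A|^{2}\phi^{2}\le \int_{M}|\nabla\phi|^{2}$ for every Lipschitz $\phi$ with compact support in $M$; combining these yields
\[
-2\int_{M} K \phi^{2} \;\le\; \int_{M} |\nabla\phi|^{2}.
\]
Because $B_{M}(p,R)$ is a topological disk with compact closure, Gauss--Bonnet applies on $B_{M}(p,s)$ for a.e.\ $s\in(0,R)$, giving $\int_{B_{M}(p,s)} K + \int_{\partial B_{M}(p,s)} \kappa_{g} = 2\pi$.

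Write $r(x)=d_{M}(p,x)$, $A(s)=|B_{M}(p,s)|$, $L(s)=\mathcal{H}^{1}(\partial B_{M}(p,s))$, and $f(s)=-\int_{B_{M}(p,s)} K \ge 0$. The coarea formula together with $|\nabla r|=1$ a.e.\ gives $A'(s)=L(s)$, and the first variation of arc length combined with Gauss--Bonnet gives $L'(s)=2\pi+f(s)$ for a.e.\ $s$. I would then plug the $1$-Lipschitz cutoff $\phi(x)=(\rho-r(x))_{+}$ into the stability inequality: since $|\nabla\phi|\le 1$ on $B_{M}(p,\rho)$, the right-hand side is bounded by $A(\rho)$, while coarea turns the left-hand side into $\int_{0}^{\rho} 2(\rho-s)^{2} f'(s)\,ds$. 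One integration by parts, using $f(0)=0$, gives $4\int_{0}^{\rho}(\rho-s)f(s)\,ds$. Substituting $f(s)=L'(s)-2\pi$ and integrating by parts once more,
\[
4\int_{0}^{\rho}(\rho-s)L'(s)\,ds \;=\; 4\int_{0}^{\rho} L(s)\,ds \;=\; 4A(\rho), \qquad 8\pi\int_{0}^{\rho}(\rho-s)\,ds \;=\; 4\pi\rho^{2},
\]
so the stability bound becomes $4A(\rho)-4\pi\rho^{2}\le A(\rho)$, i.e.\ $A(\rho)\le \tfrac{4\pi}{3}\rho^{2}$.

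I expect the main technical hurdle to be the regularity of the distance function $r$ and its level sets: cut points of $p$ may render $\partial B_{M}(p,s)$ nonsmooth, and the sublevel sets $B_{M}(p,s)$ for $s<R$ need not \emph{a priori} be topological disks. I would handle this by selecting regular values of $r$ via Sard's theorem, exhausting $B_{M}(p,R)$ by smooth subdomains that inherit the disk topology from the ambient disk, and approximating the cutoff $\phi$ by smooth compactly supported functions, so that all uses of coarea, Gauss--Bonnet, and integration by parts are justified. Apart from this regularization, the proof is driven entirely by the three identities $|A|^{2}=-2K$, the stability inequality, and $L'(s)=2\pi+f(s)$, and reduces to the short one-variable computation above.
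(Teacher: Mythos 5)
Your proposal is correct and is essentially the classical Fiala--Hartman/Pogorelov argument; the paper itself does not reprove Theorem~\ref{theo:pog} (it only cites Pogorelov, White's notes, and Munteanu--Sung--Wang), so there is no internal proof to compare against. The computation is accurate: plugging $\phi=(\rho-r)_+$ into stability together with $|A|^2=-2K$ gives $4\int_0^\rho(\rho-s)f(s)\,ds\le A(\rho)$, and the two integrations by parts using $A'=L$ and $L'\le 2\pi\chi(B_s)-\int_{B_s}K$ produce $3A(\rho)\le 4\pi\rho^2$.

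Two small remarks on the ``technical hurdles'' you flag. First, you do not actually need each sublevel set $B_M(p,s)$ to be a disk: $B_M(p,s)$ is connected (any point is joined to $p$ by a geodesic lying inside it) and is an open subsurface of the disk $B_M(p,R)$, hence has genus $0$ and at least one boundary circle, so $\chi(B_M(p,s))\le 1$. Gauss--Bonnet then gives $L'(s)\le 2\pi\chi(B_s)+f(s)\le 2\pi+f(s)$, which is the only direction your argument uses. Second, regarding cut points: $L(s)$ need not be absolutely continuous, but its singular part is a nonpositive measure (cut points only shorten $\partial B_s$), so interpreting $L'$ as a signed measure and performing the integration by parts in the Stieltjes sense still yields $\int_0^\rho(\rho-s)\,dL(s)=\int_0^\rho L(s)\,ds=A(\rho)$ (using $L(0)=0$), and the inequality $dL\le(2\pi+f(s))\,ds$ runs the correct way. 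Both of these replace the Sard/exhaustion scheme you sketch and close the argument without further approximation. The route itself is the standard one; a closely related presentation is in White's notes (cited in the theorem) and in Colding--Minicozzi's refinement, where the same length--second-variation bookkeeping appears.
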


On the other hand, Questions \ref{quest:stable-bern} and \ref{quest:stable-volume-growth} remains open (even for the area functional) for $n=4,5,6$. There exist non-flat stable minimal hypersurfaces (area minimizers) in $\RR^{8}$ and beyond \cite{BDG,HardtSimon} (thus answering Question \ref{quest:stable-bern} in the negative), but all known examples satisfy the conclusion of Question \ref{quest:stable-volume-growth}. Note that Schoen--Simon--Yau \cite{SSY} (cf. \cite{Simons,SchoenSimon,Wickramasekera}) have shown that when $n\leq 5$, a complete two-sided stable minimal immersion satisfying the volume growth condition in Question \ref{quest:stable-volume-growth} must be flat. 

For arbitrary elliptic functionals, there are non-flat minimizers for $n\geq 3$ \cite{Morgan:cone-cliff,MooneyYang}, but as in the case of area, all known examples satisfy the intrinsic volume growth condition in Question \ref{quest:stable-volume-growth}. When $n=2$, Question \ref{quest:stable-bern} is open for general elliptic functionals but is resolved in the affirmative assuming quadratic area growth (as shown by White \cite{White:existence-parametric}) or assuming the functional is sufficiently $C^{2}$-close\footnote{Throughout this article, \emph{$\Phi$ is $C^{k,\alpha}$-sufficiently close to area} will mean that $\Vert \Phi - 1\Vert_{C^{k,\alpha}(S^{n})} \leq \eps(n)$ for some fixed $\eps(n)>0$. } to area (as shown by Lin \cite{Lin:parametric}; see also \cite{Jenkins,Simon:bern-extend}). Still for $n=2$, Colding--Minicozzi have given a new proof of Theorem \ref{theo:pog} that extends to show that Question \ref{quest:stable-volume-growth} holds in the affimative for functionals sufficiently $C^{2}$-close to area. When $n\geq 3$, Question \ref{quest:stable-bern} is answered in the negative by considering the non-flat area minimizing solutions constructed by Mooney--Yang \cite{MooneyYang} (see also \cite{Morgan:cone-cliff,Mooney2019minimal}). 
On the other hand, Winklmann has resolved Question \ref{quest:stable-bern} in the affirmative for $n\leq 5$ under the assumptions that the functional is sufficiently $C^{4}$-close to area and that the surface satisfies the growth condition from Question \ref{quest:stable-volume-growth}.

\subsection{Main results} 

In this article, we consider the volume growth problem (Question \ref{quest:stable-volume-growth}) for $\bphi$-stable hypersurfaces  $\RR^{4}$. In fact, the estimate we prove here is new even in the case of stable minimal hypersurfaces. As such, it yields an alternative approach to our recent result \cite{ChodoshLi:R4} (this is discussed further in Section \ref{sec:min-hyp}). 

We note that all constants in this paper can be given explicitly, see Remark \ref{rema:constants}. 

\begin{theo}\label{theo:main.global}
Assume that $\Phi$ satisfies 
\begin{equation}\label{assumption.phi}
		|v|^2 \le D^2 \Phi(\nu) (v,v) \le \sqrt{2} |v|^2,
	\end{equation}
for all $v \in \nu^{\perp}$.
	 Consider $M^3\to \RR^4$ a complete, two-sided, $\bphi$-stationary and stable immersion. Suppose $0\in M$, and $M$ is simply\footnote{We note that a standard argument (cf.\ \cite{Fischer-ColbrieSchoen1980structure}) shows that if $M^{3}\to\RR^{4}$ is a complete two-sided $\bphi$-stable immersion, then so is the immersion from the universal cover.} connected. Then there exist explicit constants $V_0=V_0(\|\Phi\|_{C^1(S^{3})}), Q >0$ such that
	\begin{enumerate}[label=(\roman*)]
		\item $|B_{M}(0,\rho)|\le V_0 \rho^3$, for all $\rho>0$.
		\item For each connected component $\Sigma_0$ of $\partial B_{M}(0,\rho)$, we have
		\[\max_{x\in \Sigma_0} r(x)\le Q \min_{x\in \Sigma_0} r(x),\]
		where $r(x):=d_{\RR^4}(0,x)$.
	\end{enumerate}
\end{theo}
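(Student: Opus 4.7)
The plan is to adapt the $\mu$-bubble / warped-product argument from our paper \cite{ChodoshLi:R4} to the anisotropic setting, using the quantitative $C^{2}$-closeness in \eqref{assumption.phi} to absorb the error terms introduced by the anisotropy. For each large $\rho$, I would first construct a $\bphi$-weighted $\mu$-bubble $\Sigma \subset B_{M}(0,\rho)\setminus B_{M}(0,\rho/2)$: a smooth minimizer of a functional of the form
\[
\cA(\Omega) = \bphi(\partial^{*}\Omega) - \int_{\Omega} h\,d\vol_{M},
\]
where $h$ is a smooth prescribed-mean-curvature potential diverging on the two boundary shells so as to provide barriers forcing $\Sigma$ into the open annulus. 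Existence and smoothness follow from the uniform ellipticity guaranteed by \eqref{assumption.phi} and standard direct methods; the Euler--Lagrange equation forces the $\bphi$-mean curvature of $\Sigma$ to equal $h$, and the second variation yields an associated $\mu$-bubble stability inequality on $\Sigma$.

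Combining the $\bphi$-stability of $M$ in $\RR^{4}$ with this $\mu$-bubble stability of $\Sigma$ in $M$, and applying a conformal/warped-product substitution in the test function, I would derive on the $2$-dimensional closed surface $\Sigma$ a spectral inequality of the form
\[
\int_{\Sigma}|\nabla \varphi|^{2}\ \ge\ \int_{\Sigma}\bigl(K_{\Sigma} + c\,h^{2}\bigr)\varphi^{2}
\]
for all $\varphi\in C^{\infty}(\Sigma)$, with $c>0$ depending only on the spectral gap in \eqref{assumption.phi}. The role of the numerical constant $\sqrt{2}$ in \eqref{assumption.phi} is precisely to ensure that the anisotropic cross terms (involving $D^{2}\Phi(\nu)$ paired with squared norms of the second fundamental form) can be absorbed via Cauchy--Schwarz into a genuinely positive multiple of $h^{2}$. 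Taking $\varphi\equiv 1$ and invoking Gauss--Bonnet yields $c\int_{\Sigma}h^{2}\le 2\pi\chi(\Sigma)$. The simple-connectedness of $M$, together with an appropriate homological choice of the $\mu$-bubble as in \cite{ChodoshLi:R4}, bounds $\chi(\Sigma)$ by a universal constant; choosing $h$ comparable to $1/\rho$ in the annulus then produces $|\Sigma|\le C\rho^{2}$, which via a standard coarea sweep-out over intrinsic distance spheres yields the cubic volume bound $|B_{M}(0,\rho)|\le V_{0}\rho^{3}$ of (i).

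For part (ii), I would argue by contradiction. If the ambient distance $r$ oscillated by an unbounded factor $Q$ on a single intrinsic component $\Sigma_{0}$ of $\partial B_{M}(0,\rho)$, then for each large $N$ one could nest $N$ disjoint $\mu$-bubbles, one between each consecutive pair of shells $\{r=r_{j}\}$ lying between the minimum and maximum of $r$ on $\Sigma_{0}$, each contributing a definite lower bound to $\int h^{2}$. The total contribution would exceed the universal Gauss--Bonnet bound from above, giving the desired contradiction and hence a bound $\max r \le Q \min r$ on $\Sigma_{0}$.

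The principal technical obstacle is the derivation of the spectral inequality on $\Sigma$: in the isotropic case the Gauss equation trace leads to a clean rewriting of the stability integrand, whereas for general $\Phi$ the weighted second fundamental form $A_{\Phi} = D^{2}\Phi(\nu)\cdot A$ is not symmetric and traces awkwardly, so that additional anisotropic tensors persist through the calculation. One must use \eqref{assumption.phi} both to symmetrize these error terms and to retain a definite positive coefficient in front of $h^{2}$ after all the absorptions---the numerical value $\sqrt{2}$ is what makes the final constants positive. A secondary subtlety is the coordinated choice of $h$ and of the homology class of the $\mu$-bubble so that smooth existence and the topological control needed above hold simultaneously, using only the simply-connected hypothesis on $M$ and not on the annulus.
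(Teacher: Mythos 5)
Your sketch has the right flavor ($\mu$-bubbles, Gauss--Bonnet, topological control from simple-connectedness), but it is missing the two ideas that actually make the paper's proof work, and one of your steps fails as stated. First, you never produce the \emph{strict} positivity needed to run a warped $\mu$-bubble with universal constants. Working directly in the annulus $B_M(0,\rho)\setminus B_M(0,\rho/2)$ with $h\sim 1/\rho$, the stability inequality \eqref{eq.almost.stability.general} together with Lemma \ref{lemm:compare.A.and.R} only yields a \emph{nonnegative} spectral condition of the form $\lambda_1(-\Delta+\tfrac12 R)\ge 0$ on $(M,g)$; the constant $c>0$ you want in front of $h^2$ cannot come from \eqref{assumption.phi}, whose role is entirely different (it gives $\Lambda\ge 1/\sqrt2>1/2$ in the second variation and the pinching $-R\le|A|^2\le -c_0R$ from stationarity). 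The paper's key step is the Gulliver--Lawson conformal change $\tilde g=r^{-2}g$: for $n\ge 3$ this produces the extra positive term $\tfrac n2(\tfrac{n-2}{2}-\tfrac{1}{2\beta})$ and hence $\lambda_1(-\tilde\Delta+\tfrac12\tilde R)\ge\lambda>0$ (Proposition \ref{prop.conformal.trick}), after which Lemma \ref{lemm:mu.bubble} is applied to the \emph{Riemannian} manifold $(M',\tilde g)$ -- there is no anisotropic perimeter at the $\mu$-bubble level (indeed $\bphi(\partial^*\Omega)$ is not even defined for a $2$-surface inside $M$, since $\Phi$ acts on normals in $\RR^4$). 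The diameter bound of the $\mu$-bubble in $\tilde g$, translated by the logarithmic distance comparison of Lemma \ref{lemm.compare.distance}, is also exactly what gives part (ii); your nested-shell contradiction is a plausible substitute but is not needed once the conformal picture is in place. You also omit the one-endedness input (Proposition \ref{prop.one.ended}, the anisotropic Cao--Shen--Zhu argument), without which one cannot form the compact region $M'$ to which the $\mu$-bubble lemma is applied.

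Second, the passage from the surface area bound to the volume bound is a genuine gap: a bound $|\Sigma|\le C\rho^2$ for \emph{one} $\mu$-bubble per annulus does not feed into the coarea formula, because the $\mu$-bubbles are not level sets of the intrinsic distance function, so ``a standard coarea sweep-out'' does not apply. The paper instead bounds the volume of the region $M_0$ enclosed by $\Sigma$ using the first variation with the position vector field: Corollary \ref{coro.isop.ineq} gives $|M_0|\le \tfrac{\rho\|\Phi\|_{C^1}}{3\min\Phi}|\partial M_0|$ for $\bphi$-stationary hypersurfaces with boundary in $B_{\RR^4}(0,\rho)$ (the anisotropic substitute for Brendle's isoperimetric inequality used in the minimal case). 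Some such monotonicity/isoperimetric input is indispensable here, and your proposal contains no replacement for it.
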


Note that \eqref{assumption.phi} implies that $\nu \mapsto \Phi(\nu)$ is convex (since $D^{2}\Phi(\nu)(\nu,\nu) = 0$ by $1$-homogeneity). As such, all $\bphi$ considered in Theorem \ref{theo:main.global} satisfy the ellipticity condition mentioned previously.

We note that by combining Theorem \ref{theo:main.global} with \cite{Wink:pointwise}, we obtain:

\begin{coro}\label{coro:stable-bern-Phi-R4}
If $\Phi$ is $C^{4}$-sufficiently close to area, then any two-sided complete $\bphi$-stationary and stable immersion is flat. 
\end{coro}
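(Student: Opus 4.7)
The plan is that this corollary is essentially a direct combination of Theorem \ref{theo:main.global} with Winklmann's Bernstein-type theorem \cite{Wink:pointwise}, which asserts that for $n \le 5$, a two-sided complete $\bphi$-stationary and stable immersion in $\RR^{n+1}$ with Euclidean volume growth is flat, provided $\Phi$ is sufficiently $C^{4}$-close to the area functional. Thus the only real task is to verify that Theorem \ref{theo:main.global} applies to supply the needed volume growth hypothesis.

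First I would check that the ellipticity bound \eqref{assumption.phi} is satisfied. At $\Phi(\nu)=|\nu|$ one computes that $D^{2}\Phi(\nu)|_{\nu^{\perp}}$ is the identity, so for $\Phi$ sufficiently $C^{2}$-close to the area functional on $S^{3}$ the Hessian restricted to $\nu^{\perp}$ has eigenvalues in, say, $[1, \sqrt{2}]$; since $C^{4}$-closeness is strictly stronger than $C^{2}$-closeness, \eqref{assumption.phi} holds automatically for our $\Phi$.

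Next, given a complete two-sided $\bphi$-stationary and stable immersion $M^{3}\to\RR^{4}$, I would (if necessary) pass to the universal cover $\widetilde{M}\to\RR^{4}$, which by the footnote of Theorem \ref{theo:main.global} remains two-sided and $\bphi$-stable and is now simply connected. Fixing any basepoint $0\in\widetilde M$, Theorem \ref{theo:main.global}(i) yields
\[
|B_{\widetilde M}(0,\rho)|\le V_{0}\rho^{3} \qquad \text{for all } \rho>0,
\]
so $\widetilde M$ satisfies the cubic intrinsic volume growth hypothesis of Question \ref{quest:stable-volume-growth}. Applying Winklmann's theorem \cite{Wink:pointwise} with $n=3$ (using $C^{4}$-closeness to area) we conclude that $\widetilde M$ is a flat hyperplane, and hence $M$ is flat as well.

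Since this is a direct concatenation of two results, there is no serious obstacle; the only care needed is to ensure the $C^{2}$-threshold implicit in \eqref{assumption.phi} and the $C^{4}$-threshold required by \cite{Wink:pointwise} are both met, which one arranges by simply taking the smaller of the two $C^{4}$-neighborhoods of the area functional.
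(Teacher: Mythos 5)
Your proposal is correct and is exactly the argument the paper intends: Corollary \ref{coro:stable-bern-Phi-R4} is presented as a direct combination of Theorem \ref{theo:main.global} (applied to the universal cover, with flatness descending to $M$) with Winklmann's theorem, and your write-up just supplies the routine verifications. The one pedantic point is that $C^{2}$-closeness to area places the eigenvalues of $D^{2}\Phi(\nu)|_{\nu^{\perp}}$ in $[1-\eps,1+\eps]$ rather than in $[1,\sqrt{2}]$, so to literally satisfy \eqref{assumption.phi} you should first rescale $\Phi$ by a positive constant (which changes neither $\bphi$-stationarity nor $\bphi$-stability), after which the eigenvalue ratio bound $(1+\eps)/(1-\eps)\le\sqrt{2}$ gives what you need.
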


\begin{rema}
Although it is not explicitly done in \cite{Wink:pointwise}, the ``sufficiently close'' requirement can be quantified. Alternatively, we note that by combining  Theorem \ref{theo:main.global} with a contradiction argument in the spirit of \cite{Simon:bern-extend}, Corollary \ref{coro:stable-bern-Phi-R4} actually holds under the weaker assumption of $C^{2,\alpha}$-closeness (but with no numerical estimate of the required closeness). 
\end{rema}

We can also prove a localized version of Theorem \ref{theo:main.global} more in the spirit of Pogorelov's result (cf.\ Theorem \ref{theo:pog}). The estimate we prove here is slightly different, since it considers extrinsic balls, but is an interior\footnote{As observed in \cite[\S 1]{GulliverLawson1986singularity}, the bridge principle for stable minimal surfaces \cite{MeeksYau:exist} implies that there cannot be an estimate for the area of a proper stable minimal immersion $M^{2}\to B_{1}(0)\subset \RR^{3}$, even if $M$ is topologically constrained to be a disk. } estimate. Even for stable minimal surfaces, we are not aware of such an estimate in $\RR^{3}$ with explicit\footnote{Given an area-free curvature estimate (available for minimal surfaces when $n=2,3$ \cite{Schoen:estimates,ChodoshLi:R4}), one can prove an extrinsic interior Pogorelov result in the spirit of Theorem \ref{theo:main.local} by a straightforward contradiction argument (with no control on the constant). The method used here gives an alternative proof of this curvature estimate (and extends to certain elliptic integrands) and yields explicit (and not too large) constants. } constants, cf.\ Remark \ref{rema:constants}.  

\begin{theo}\label{theo:main.local}
	Suppose that $\Phi$ satisfies \eqref{assumption.phi}. Assume $M^3\to B_1(0)\subset \RR^4$ is a proper, two-sided, $\bphi$-stationary and stable immersion. Suppose $0\in M$, $M$ is simply connected, and $\partial M$ is connected. Then there exist explicit constants $\rho_0 \in (0,1), V_1= V_1(\|\Phi\|_{C^1(S^{3})})$, such that
	\[|M_{\rho_0}^*|\le V_1,\]
	where $M_{\rho_0}^*$ is the connected component of $M\cap B_{\RR^{4}}(0,\rho_{0})$ that contains $0$. 
\end{theo}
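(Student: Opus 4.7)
The plan is to carry out the same $\mu$-bubble/stability strategy that underlies Theorem \ref{theo:main.global}, now sweeping $M_{\rho_0}^*$ inward from its boundary rather than outward from $0$. As the remark preceding the theorem rules out a blow-up/contradiction approach (which would lose control of constants), I would arrange the sweepout directly.

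First, I would use $\bphi$-stability together with the two-sided bound \eqref{assumption.phi} to extract a Schoen--Simon--Yau-type spectral inequality
\[
c_1 \int_{M_{\rho_0}^*} |A|^2 f^2 \;\leq\; \int_{M_{\rho_0}^*} |\nabla f|^2
\]
for all compactly supported Lipschitz $f$, with $c_1>0$ explicit in $\|\Phi\|_{C^1(S^3)}$. Combined with the Gauss equations for an immersion into $\RR^4$, this plays the role of strict positivity of the intrinsic scalar curvature of $M_{\rho_0}^*$ and powers the warped $\mu$-bubble philosophy of \cite{ChodoshLi:R4}.

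Second, I would fix $\rho_0 \in (0,1)$ small enough (explicitly in terms of $\|\Phi\|_{C^1(S^3)}$) that level sets of the extrinsic radius $r$ restricted to $M$ provide an outward barrier for the mean curvature prescribed in the $\mu$-bubble functional. Solving the resulting obstacle problem with Dirichlet data on $\partial M_{\rho_0}^*$ produces a foliation $\{\Sigma_t\}_{0 \leq t \leq T}$ of $M_{\rho_0}^*$ by prescribed-mean-curvature leaves that remain trapped inside $M_{\rho_0}^*$, with sweep-length $T$ controlled by a constant multiple of $\rho_0$ by virtue of the extrinsic containment $M_{\rho_0}^* \subset B_{\RR^4}(0,\rho_0)$.

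Third, the stability of each leaf $\Sigma_t$, combined with Gauss--Bonnet on $\Sigma_t$ and the spectral inequality of Step~1, yields a uniform area bound $|\Sigma_t| \leq C$ and forces $\Sigma_t$ to be topologically controlled, by exactly the three-dimensional analogue of Pogorelov's argument used in the proof of Theorem~\ref{theo:main.global}(i). (That simple connectedness and connected $\partial M$ pass to $M_{\rho_0}^*$ at the explicit scale $\rho_0$ is verified via a one-parameter transversality argument in $\rho$, using the topology assumption on $M$.) The coarea formula
\[
|M_{\rho_0}^*| \;\leq\; \int_0^T |\Sigma_t|\, dt \;\leq\; CT
\]
then closes the estimate, with $V_1$ depending explicitly on $\|\Phi\|_{C^1(S^3)}$.

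The hardest step, I expect, is the second one: simultaneously constructing honest boundary barriers so that the $\mu$-bubble cannot escape $M_{\rho_0}^*$, and maintaining topological control of the leaves throughout the sweep. These are exactly the issues that determine the explicit value of $\rho_0$ and distinguish this interior statement from the completeness-based Theorem~\ref{theo:main.global}, where an infinite sweepout can be set up with far more freedom.
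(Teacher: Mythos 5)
Your proposal captures the $\mu$-bubble philosophy, but it has two significant gaps that prevent it from matching (or succeeding as) the paper's argument.

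\textbf{Missing the Gulliver--Lawson conformal change.} The stability inequality \eqref{eq.almost.stability.root2} combined with Lemma \ref{lemm:compare.A.and.R} only gives $\int |\nabla\varphi|^2 + \tfrac12 R\varphi^2 \ge 0$, which is \emph{non-negativity} of $-\Delta + \tfrac12 R$, not the \emph{strictly} positive spectral gap $\lambda_1(-\Delta+\tfrac12 R)\ge\lambda>0$ needed to run Lemma \ref{lemm:mu.bubble}. The engine that produces this strict positivity is the conformal change $\tilde g = r^{-2}g$ (Section \ref{section.conformal}, Proposition \ref{prop.conformal.trick}): the $1$-homogeneity of $r$ contributes a positive constant to the scalar-curvature term, and only after this conformal deformation does the spectral hypothesis of Lemma \ref{lemm:mu.bubble} hold. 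Your Step 1 asserts that stability ``plays the role of strict positivity,'' but without the conformal deformation this is simply false, and your explicit constants cannot emerge.

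\textbf{Foliation/coarea instead of a single bubble plus the first-variation isoperimetric estimate.} The paper does \emph{not} sweep $M^*_{\rho_0}$ by a foliation. It finds a single $\mu$-bubble $\Omega$ near $\partial M$ in the $\tilde g$-metric, lets $M'$ be the component of $M\setminus\Omega$ containing $0$, shows $M^*_{\rho_0}\subset M'$ via Lemma \ref{lemm.compare.distance}, and then bounds $|M'|_g$ using Corollary \ref{coro.isop.ineq} --- the isoperimetric inequality obtained by plugging the position vector field into the first variation. That corollary bounds volume by boundary area precisely because $\partial M'\subset B_{\RR^4}(0,1)$. Your proposed replacement --- a foliation $\{\Sigma_t\}$ by prescribed-mean-curvature leaves and a coarea estimate $|M^*_{\rho_0}|\le\int_0^T|\Sigma_t|\,dt$ --- is a genuinely different route, but as stated it is underdeveloped and likely to fail. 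The $\mu$-bubble lemma produces one good slice, not a foliation; you would have to iterate the construction and show the slices remain nested, regular, and with area controlled throughout. Moreover, the coarea inequality you invoke requires a lower bound on the speed of the sweep (on $|\nabla f|$ for the parametrizing function), which you have not arranged; and you are not careful about whether the length $T$ and the areas $|\Sigma_t|$ are measured in $g$ or $\tilde g$ --- the $\mu$-bubble area bound $\frac{8\pi}{\lambda}$ is in $\tilde g$, so converting to a $g$-volume bound requires using $r\le\rho_0$ to control the conformal factor, a step you do not mention. The paper's single-bubble-plus-isoperimetric strategy sidesteps all of these difficulties cleanly and is where the explicit constant $V_1$ actually comes from.
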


\begin{rema}
	More generally, we can drop the requirement that $M$ is simply connected and $\partial M$ is connected. In this case, we have:
	\[|M_{\rho_0}^*|\le V_1(b_1(M) + E),\]
	where $E$ is the number of boundary connected components of $M$ and $b_{1}(M)$ is the first Betti number. 
\end{rema}

\begin{rema}\label{rema:constants}
One may explicitly compute the constants $V_0,V_{1}$, $Q$, $\rho_{0}$ as follows. Let 
	\begin{align*}
	c_0 & = \frac{1}{\sqrt 2 - \frac 12},\\
	\lambda & = \frac32 \left(\frac12 - \frac{3(c_0-1)}{8(\frac{1}{\sqrt 2} - \frac12)}\right) = \frac{3(5+3\sqrt 2)}{56} \approx 0.495.
	\end{align*}
	Then we have
	\[V_0 = \frac{8\pi e^{\frac{15\pi}{\lambda}} \|\Phi\|_{C^1(S^3)} }{3\lambda \min_{\nu \in S^3} \Phi(\nu)}, \quad Q = e^{\frac{7\pi}{\sqrt \lambda}},\] 
	and
	\[\rho_0 = e^{-\frac{5\pi}{\sqrt \lambda}},\quad V_1= \frac{8\pi  \|\Phi\|_{C^1(S^3)}}{3\lambda\min_{\nu\in S^3}\Phi(\nu)}.\]
\end{rema}

\subsection{Related work}

We recall here some works (beyond those mentioned above) that are related to this paper. The regularity of hypersurfaces \emph{minimizing} parametric elliptic integrands has been studied in several places including \cite{Federer,SchoenSimonAlmgren1977regularity,SS:par-ell,Figalli:reg-c1alpha}. See also \cite{Allard1983stableelliptic,White1987curvature} for estimates without the minimizing hypothesis. Existence of critical points of parametric elliptic integrands has been considered in \cite{White:space,White:existence-parametric,DPDR:minmax}. Finally, we note that stable solutions for the nonlocal area functional satisfy an a priori growth estimate (as in Question \ref{quest:stable-volume-growth}) in all dimensions \cite{CSV:nonlocal} (see also \cite{FigalliSerra:nonlocalAC}).

\subsection{Notation}
We will use the following notation: 

\begin{itemize}
	\item $B_{\RR^{n+1}}(0,\rho) := \{x\in \RR^{n+1}: |x| < \rho\}$.
	\item $r(x) = \dist_{\RR^{n+1}} (0,x)$.
	\item $M^n\to\RR^{n+1}$ is an immersion and $g$ the induced Riemannian metric on $M$.
	\item $D$ is the connection in $\RR^{n+1}$, $\nabla$ is the induced connection on $M$.
	\item $\mu$ is the volume form of $g$.
	\item $B_{M}(0,\rho) : = \{x\in M: \dist_{M,g} (0,x)<\rho\}$.
	\item $\nu$ is a choice of unit normal vector field of  $M$.
	\item The shape operator will be written $S = \nabla \nu$, and the second fundamental form written $A(X,Y) = S(X)\cdot Y$.
	\item The scalar curvature of $g$ will be denoted by $R$. 
	\item We will use the $\ell^{2}$-norm to define $C^{k}$-norms, i.e.\ $\Vert f \Vert_{C^{k}} : = (\sum_{j=0}^{k} \Vert D^{(j)}f \Vert_{C^{0}}^{2})^{\frac 12}$.
\end{itemize}

\subsection{Organization of the paper}
In Section \ref{sec:min-hyp} we explain the techniques used in this paper in the special case of the area functional. The remaining part of the paper contains the details necessary for the generalization to anisotropic integrands. We begin in Section \ref{sec:prelim-aniso} with some preliminary results. Section \ref{sec:one-end} contains a generalization of the one-ended result for stable minimal hypersurfaces due to Cao--Shen--Zhu to the case of certain anisotropic integrands.  We describe the conformally changed metric in Section \ref{section.conformal} as introduced by Gulliver--Lawson and then combine these techniques with $\mu$-bubbles to prove the main results in Section \ref{section.conformal}.  Appendix \ref{app:var} contains (well-known) computations of the first and second variation for elliptic integrands. Appendix \ref{appendix.quadratic.forms} contains an auxiliary result comparing certain quadratic forms.

\subsection{Acknowledgements}We are grateful to Fang-Hua Lin
 and Guido De Philippis for their interest and for several discussions, as well as Doug Stryker for pointing out a mistake in an earlier version of the paper, and to the referees for some useful suggestions. O.C. was supported by an NSF grant
(DMS-2016403), a Terman Fellowship, and a Sloan Fellowship. C.L. was supported by an NSF grant (DMS-2202343).

\section{Volume growth for stable minimal hypersurfaces in $\RR^{4}$} \label{sec:min-hyp}

In this section, we illustrate how one may use stability to deduce area estimates for stable minimal immersions $M^3\to \RR^4$. We will defer certain ancillary results and computation to later sections (where they were carried out for general $\Phi$-stationary and stable hypersurfaces), and instead focus on the geometric ideas and consequences.

The main result we will prove here is as follows

\begin{theo}\label{theo.minimal}
	Let $M^3\to \RR^4$ be a complete, two-sided, simply connected, stable minimal immersion, $0\in M$. Then,
	\[
	|B_{M}(0,\rho)| \leq \left(\frac{32\pi}{3}\right)^{\frac 32}\frac{e^{\frac{30\pi}{\sqrt 3}}}{6\sqrt \pi} \rho^{3}
	\]
	for all $\rho\geq 0$. 
\end{theo}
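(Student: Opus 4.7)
The plan is to combine the stability inequality with a conformal change of the induced metric (in the spirit of Gulliver--Lawson), reducing to a setting with pointwise positive scalar curvature, and then to run a warped $\mu$-bubble argument of Gromov--Lawson type to extract the cubic volume bound. This is the same overall strategy as our recent stable Bernstein paper, but tuned to produce explicit constants.

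For a two-sided minimal immersion $M^{3}\to\mathbf{R}^{4}$, the Gauss equation gives $R_{M}=-|A|^{2}\leq 0$, so stability reads $\int_{M}(|\nabla \phi|^{2}+R_{M}\phi^{2})\,d\mu\geq 0$ for every compactly supported $\phi$. Feeding in the Schoen--Simon--Yau refined Kato inequality $|\nabla A|^{2}\geq \tfrac{5}{3}|\nabla |A||^{2}$ available in dimension $n=3$, I would upgrade this to a strict improvement of the form
\[ \int_{M}\bigl(c_{0}|\nabla\phi|^{2}+R_{M}\phi^{2}\bigr)\,d\mu \;\geq\; \lambda \int_{M}|A|^{2}\phi^{2}\,d\mu \]
for an explicit $c_{0}$ slightly larger than $1$ and $\lambda$ of order $3/4$. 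Thus a Schr\"odinger operator of conformal-Laplacian type is uniformly positive. Taking a positive principal eigenfunction $u$ of that operator on an exhaustion and conformally changing to $\tilde g = u^{\alpha} g$ produces a metric of strictly positive scalar curvature on $M$, while $u$ is controlled enough that $g$ and $\tilde g$ remain comparable in a quantifiable way. A Cao--Shen--Zhu style argument, using the same spectral positivity to rule out harmonic-type splittings, then shows that $M$ has exactly one end.

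Working in $(M,\tilde g)$, I would fix $\rho$ large and inside the annular region $B_{M}(0,\rho)\setminus B_{M}(0,\rho/2)$ construct a warped $\mu$-bubble $\Sigma^{2}$ minimizing $|\Sigma|_{\tilde g} - \int h\,d\tilde V$ for a warping profile $h$ depending on $\tilde g$-distance to the inner boundary. Simple connectivity of $M$ combined with the one-end property forces the components of $\Sigma$ to have controlled topology, so that Gauss--Bonnet gives $\int_{\Sigma} K_{\Sigma}\leq 8\pi$ per component. Plugging $\phi\equiv 1$ into the $\mu$-bubble stability inequality, and rewriting the ambient curvature term via the Gauss equation $K_{\Sigma} = \tfrac12 R_{\tilde g} - \Ric_{\tilde g}(\nu,\nu) + \tfrac12(H_{\Sigma}^{2}-|II_{\Sigma}|^{2})$ together with $R_{\tilde g}>0$, produces an ODE differential inequality for $h$ whose maximal interval of existence is a universal constant. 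This caps the $\tilde g$-diameter of the annulus, and converting back through the conformal factor yields $|B_{M}(0,\rho)|_{g}\leq C\rho^{3}$ with explicit $C$.

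The main obstacle is the joint calibration of the conformal change and the $\mu$-bubble warping: the conformal factor must absorb enough of $R_{M}$ to produce a useful positive lower bound on $R_{\tilde g}$ while keeping $u$ (and hence the $g$-versus-$\tilde g$ comparison) under explicit control, and the warping profile $h$ must be tuned against $R_{\tilde g}$ so that the $\mu$-bubble ODE blows up in finite $\tilde g$-distance. The explicit constants in the theorem (in particular the exponent $30\pi/\sqrt{3}$) are precisely what comes out of this optimization, with the eigenvalue $\lambda=3/4$ playing the role of the spectral gap.
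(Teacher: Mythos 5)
Your high-level strategy (conformal change to reach strict scalar-curvature positivity, one-endedness \`a la Cao--Shen--Zhu, then a warped $\mu$-bubble band-width argument exploiting simple connectivity) matches the paper's, but the crucial step --- the choice of conformal factor --- is wrong in a way that breaks the quantitative conclusion. You conformally change by a power of a principal eigenfunction $u$ of the stability operator and assert that ``$u$ is controlled enough that $g$ and $\tilde g$ remain comparable in a quantifiable way.'' There is no such control: $u$ is merely a positive supersolution produced by Fischer-Colbrie--Schoen on a complete noncompact manifold and can decay or grow arbitrarily, so nothing converts $\tilde g$-distances, $\tilde g$-areas, or the $\mu$-bubble width back into statements about $g$ or about $\rho$. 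The paper instead uses the Gulliver--Lawson factor $\tilde g = r^{-2}g$ with $r$ the \emph{Euclidean} distance to the origin: since $|\nabla r|_g\le 1$, one gets the logarithmic comparison $r(p)\le e^{D}r(q)$ whenever $d_{\tilde g}(p,q)\le D$ (Lemma \ref{lemm.compare.distance}), which is exactly what turns the universal $\tilde g$-width $\tfrac{10\pi}{\sqrt 3}$ of the $\mu$-bubble band into the multiplicative factor $e^{10\pi/\sqrt 3}$ in $\rho$. Moreover the spectral gap $\lambda=\tfrac{n(n-2)}{4}=\tfrac34$ does not come from the refined Kato inequality (which is not used at all in the minimal case); it comes from the Hardy-type term $\tfrac n2\left(n-\tfrac{n+2}{2}|\nabla r|^2\right)r^{-2}\ge \tfrac{n(n-2)}{4}r^{-2}$ generated by $\Delta\log r$ in the conformal transformation of the quadratic form, combined with the fact that stability gives $\int_M(|\nabla\varphi|^2+\tfrac12 R\varphi^2)\,d\mu\ge 0$ with room to spare (coefficient $\tfrac12<1$).

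There is a second gap at the end: even granting the $\mu$-bubble output, a diameter bound on the annular band does not bound $|B_{M}(0,\rho)|$. What the paper actually extracts from the $\mu$-bubble is that the component $M_0$ containing $0$ has \emph{connected} boundary (by simple connectivity and one-endedness) of $\tilde g$-area at most $\tfrac{32\pi}{3}$, hence of $g$-area at most $(e^{10\pi/\sqrt3}\rho)^2\cdot\tfrac{32\pi}{3}$, and then invokes Brendle's isoperimetric inequality for minimal hypersurfaces, $|M_0|\le \tfrac{1}{6\sqrt\pi}|\partial M_0|_g^{3/2}$, to pass from boundary area to enclosed volume. Your proposal has no analogue of this last step; without it (or the first-variation argument with the position vector field used in the anisotropic case, Corollary \ref{coro.isop.ineq}) the cubic volume bound does not follow.
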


Combined with the work of Schoen--Simon--Yau \cite{SSY}, this yields a new proof of our recent result \cite{ChodoshLi:R4}:
\begin{coro}\label{coro:stable-bern-R4}
Any complete, two-sided, stable minimal immersion $M^{3}\to\RR^{4}$ is flat.
\end{coro}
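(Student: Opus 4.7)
The plan is to combine the cubic volume-growth estimate from Theorem \ref{theo.minimal} with the classical rigidity theorem of Schoen--Simon--Yau \cite{SSY}, after first reducing to the simply connected setting by passing to the universal cover. The key conceptual point is that \emph{any} complete two-sided stable minimal immersion in $\RR^4$ with intrinsic Euclidean volume growth must be flat by \cite{SSY}, so the only obstruction to the Bernstein-type statement is establishing the volume bound --- which is exactly what Theorem \ref{theo.minimal} provides (once simple connectivity is arranged).

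In detail, I would proceed as follows. Let $M^3 \to \RR^4$ be a complete, two-sided, stable minimal immersion. I first invoke the standard lifting argument (cf.\ \cite{Fischer-ColbrieSchoen1980structure}, as referenced in the footnote attached to Theorem \ref{theo:main.global}): pulling back along the universal covering $\pi \colon \widetilde M \to M$ produces a complete, two-sided, simply connected immersion $\widetilde M^3 \to \RR^4$, and the stability inequality pulls back under $\pi$ (a positive solution of the Jacobi equation on $M$ lifts to a positive solution on $\widetilde M$). Fix a basepoint $\tilde 0 \in \widetilde M$. Since $\pi$ is a local isometry, the intrinsic ball $B_{\widetilde M}(\tilde 0,\rho)$ maps onto $B_M(\pi(\tilde 0),\rho)$, so controlling the volume of balls on $\widetilde M$ will suffice.

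Next I apply Theorem \ref{theo.minimal} to $\widetilde M$, yielding the cubic volume bound
\begin{equation*}
|B_{\widetilde M}(\tilde 0,\rho)| \le \left(\frac{32\pi}{3}\right)^{3/2} \frac{e^{30\pi/\sqrt 3}}{6\sqrt\pi}\,\rho^3 \qquad \text{for every } \rho > 0.
\end{equation*}
In particular, $\widetilde M$ has Euclidean volume growth. Then I invoke the Schoen--Simon--Yau theorem \cite{SSY}, which asserts that for $n \le 5$ a complete, two-sided, stable minimal immersion $\Sigma^n \to \RR^{n+1}$ with polynomial intrinsic volume growth of rate $n$ must be an affine hyperplane. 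Applied to $\widetilde M$ (with $n=3$), this forces $\widetilde M$ to be flat, i.e.\ the image of $\widetilde M$ lies in an affine $3$-plane and the immersion is a local isometry onto that plane. Since flatness of the immersion is a purely local condition and $\pi$ is a local isometry, it descends to $M$, showing that $M$ itself is flat.

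Essentially all the heavy lifting is done by Theorem \ref{theo.minimal} (the novel ingredient of this paper) and by \cite{SSY} (a classical result). The only subtleties are bookkeeping: verifying that stability genuinely lifts to the universal cover under the two-sided hypothesis, and noting that the conclusion of \cite{SSY} --- originally stated for embedded or orientable hypersurfaces --- applies in the two-sided immersed setting because its proof only uses the stability inequality and the volume growth bound, both of which are intrinsic/local and available here. There is no serious analytic obstacle at this stage; the main work of the paper has already been carried out in Theorem \ref{theo.minimal}.
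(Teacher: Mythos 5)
Your argument is correct and follows exactly the route the paper takes: the paper derives Corollary \ref{coro:stable-bern-R4} by combining the cubic volume growth of Theorem \ref{theo.minimal} (after passing to the universal cover, as noted in the footnote to Theorem \ref{theo:main.global}) with the Schoen--Simon--Yau rigidity theorem \cite{SSY}. The extra bookkeeping you supply about lifting stability and descending flatness is precisely what the paper leaves implicit.
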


In fact, we have the following localized volume estimate in the spirit of Theorem \ref{theo:pog}.

\begin{theo}\label{theo.minimal-local}
	Let $M^3\to \RR^4$ be a two-sided, simply connected, stable minimal immersion, with $0\in M$, $\partial M$ connected, and $M\to B_{\RR^{4}}(0,1) $ proper. Then,
	\[
	|M_{\rho_{0}}^{*}| \leq \left(\frac{32\pi}{3}\right)^{\frac 32}\frac{1}{6\sqrt \pi}
	\]
	where $M_{\rho_0}^*$ is the connected component of $M\cap B_{\RR^{4}}(0,r_{0})$ that contains $0$ and $\rho_{0} = e^{-\frac{10\pi}{\sqrt 3}}$.
\end{theo}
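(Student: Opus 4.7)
The plan is to mimic the strategy used for Theorem \ref{theo.minimal}, but replacing intrinsic distance with the extrinsic radial function $r(x) = |x|$ and exploiting the Gulliver--Lawson conformal change $\tilde g := r^{-2} g$ on $N := M_{\rho_{0}}^{*} \setminus \{0\}$. Under $\tilde g$, the point $0 \in M$ is pushed to infinity (the slice $\{r = \varepsilon\}$ recedes as $\varepsilon \to 0^{+}$), so the extrinsic condition $r < \rho_{0}$ becomes a neighborhood of infinity in $(N, \tilde g)$, while $\partial M_{\rho_{0}}^{*}$ stays at finite $\tilde g$-distance; the properness hypothesis ensures this is the only end, and the simple-connectedness of $M_{\rho_{0}}^{*}$ together with the connectedness of $\partial M_{\rho_{0}}^{*}$ will be used to constrain the topology of the bubbles constructed below.

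Next I would feed the stability inequality of $M$ a test function of the form $\phi = r^{-1}\eta$ with $\eta$ supported away from $0$; combined with the Gauss equation for $M^{3} \hookrightarrow \RR^{4}$ and the standard conformal transformation rules for sectional curvature and second fundamental form, this should yield, on $(N, \tilde g)$, a Schoen-type weighted stability inequality equivalent to a bi-Ricci lower bound of the form $\Ric_{\tilde g}(e_{1},e_{1}) + \Ric_{\tilde g}(e_{2},e_{2}) \ge \Lambda$ for an explicit $\Lambda > 0$ and every $\tilde g$-orthonormal pair $(e_{1},e_{2})$. This is the extrinsic analogue of the curvature positivity driving the proof of Theorem \ref{theo.minimal}.

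I would then deploy the $\mu$-bubble technique in $(N, \tilde g)$. With a warping function that diverges at the end at $0$ and is finite near $\partial M_{\rho_{0}}^{*}$, standard existence produces a smooth $2$-dimensional $\mu$-bubble $\Sigma$ separating $0$ from $\partial M_{\rho_{0}}^{*}$ at controlled $\tilde g$-distance. Since $M_{\rho_{0}}^{*}$ is simply connected with one boundary component, $\Sigma$ is forced to be a $2$-sphere; its $\mu$-bubble stability combined with the bi-Ricci lower bound gives, via Gauss--Bonnet and a Hersch-type Poincar\'e inequality, an explicit universal bound of the form $|\Sigma|_{\tilde g} \le 8\pi/\Lambda$. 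The volume estimate then follows by integrating the conformal factor $d\mu_{g} = r^{3}\, d\mu_{\tilde g}$ across the foliation of $\{r < \rho_{0}\}$ by such bubbles (or by level sets of $r$), with the explicit $\rho_{0} = e^{-10\pi/\sqrt{3}}$ emerging as the $\tilde g$-distance threshold needed for the bubble construction, and $V_{1} = (32\pi/3)^{3/2}/(6\sqrt{\pi})$ coming out of the co-area integration.

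The main obstacle is the $\mu$-bubble step: the warping function must be chosen so that the weighted bi-Ricci positivity survives, so that the bubble is guaranteed to lie in the desired region, and so that the constants come out sharply enough to match the stated $\rho_{0}$ and $V_{1}$. A secondary difficulty is the topological book-keeping, ensuring $\Sigma$ is indeed forced to be a sphere (as opposed to higher genus) under the simple-connectedness and connectedness hypotheses, which is essential for the application of the Hersch-type inequality used in the area bound.
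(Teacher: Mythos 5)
Your opening moves are the right ones: the Gulliver--Lawson conformal change $\tilde g = r^{-2}g$, the observation that $0$ is sent to infinity so that a single warped $\mu$-bubble can be placed between $\partial M$ and $0$, the topological bookkeeping forcing that bubble to be a connected sphere, and the identification of $\rho_0 = e^{-10\pi/\sqrt 3}$ as the radius corresponding to the $\tilde g$-distance reach of the bubble (this is exactly Lemma \ref{lemm.compare.distance}(1)). But there are two genuine problems. First, the curvature input you propose is wrong in kind: stability is an integral inequality and cannot yield a \emph{pointwise} bi-Ricci lower bound $\Ric_{\tilde g}(e_1,e_1)+\Ric_{\tilde g}(e_2,e_2)\ge\Lambda$. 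What the conformal computation actually produces (using $R=-|A|^2$ from the traced Gauss equation and the stability inequality with test function $r^{1/2}\varphi$) is the \emph{spectral} positivity $\lambda_1(-\tilde\Delta+\tfrac12\tilde R)\ge\lambda=\tfrac34$ on $M\setminus\{0\}$; one then invokes \cite{Fischer-ColbrieSchoen1980structure} to extract a positive supersolution $u$, which serves as the weight in the warped $\mu$-bubble functional of Lemma \ref{lemm:mu.bubble}. The area bound $8\pi/\lambda=32\pi/3$ then comes from Gauss--Bonnet applied with the test function $u^{-1/2}$ in the bubble's stability inequality, not from a Hersch-type inequality.

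The second and more serious gap is the passage from the bubble to the volume bound. You propose to ``integrate the conformal factor across the foliation by such bubbles,'' but the construction yields a single separating surface $\Sigma=\partial M'$, not a foliation, and the area of one separating surface does not by itself control the volume of the region it bounds. The missing ingredient is an isoperimetric input: since $r\le 1$ on $M$, one has $|\Sigma|_g=\int_\Sigma r^2\,d\tilde\mu\le|\Sigma|_{\tilde g}\le 32\pi/3$, and then Brendle's sharp isoperimetric inequality for minimal hypersurfaces \cite{Brendle2021isoperimetric} gives $|M'|\le\tfrac{1}{6\sqrt\pi}|\Sigma|_g^{3/2}$, whence the stated constant. (The factor $\tfrac{1}{6\sqrt\pi}$ is precisely the Euclidean isoperimetric constant in dimension $3$, which signals that this step cannot be replaced by a coarea argument.) Without this inequality, or the first-variation substitute of Corollary \ref{coro.isop.ineq} used in the anisotropic case, your argument does not close.
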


\begin{proof}[Proof of Theorem \ref{theo.minimal}]
The first step is to consider a particular conformal deformation of $(M,g)$. On $M\setminus \{0\}$, consider the conformally deformed metric $\tilde g = r^{-2}g$ (where we recall that $r$ is the \emph{Euclidean} distance to the origin and $g$ is the induced metric on $M$). We use $\tilde \nabla, \tilde \mu, \tilde \Delta$ to denote the covariant derivative, the volume form and the Laplacian with respect to $\tilde g$, respectively. This conformal change was first used by Gulliver--Lawson \cite{GulliverLawson1986singularity} to study isolated singularities for minimal hypersurfaces in $\RR^{n+1}$. 

\begin{rema}
The relevance of the Gulliver--Lawson conformal deformation is a key insight in our work. Indeed, this allows us to apply tools from the study of \emph{strictly} positive scalar curvature (cf.\ Remark \ref{rema:mu-bub}). Our previous proof of Corollary \ref{coro:stable-bern-R4} used tools from \emph{non-negative} scalar curvature (cf.\ \cite{MW:1,MW:2}).\footnote{Added in proof: some time after this paper appeared, Catino--Mastrolia--Roncoroni found a third proof of Corollary \ref{coro:stable-bern-R4}, based on a surprising connection between stability and \emph{non-negative Bakry--\'Emery Ricci curvature} \cite{CMR}.} 
\end{rema}

The computations in this part work for minimal immersions $M^n\to \RR^{n+1}$ whenever $n\ge 3$. For $\lambda\in \RR$, $\varphi\in C_0^1(M\setminus \{0\})$, consider the quadratic form
\[\cQ(\varphi) : =\int_M \left(|\tilde \nabla \varphi|_{\tilde g}^2 + (\tfrac 12 \tilde R - \lambda)\varphi^2\right)d\tilde\mu,\]
where $\tilde R$ is the scalar curvature of $\tilde g$. One computes (see Section \ref{section.conformal} for details) that
\begin{align*}
	\cQ(r^{\frac{n-2}{2}}\varphi) &= \int_M \left(r^2 |\nabla (r^{\frac{n-2}{2}}\varphi)|^2 + (\tfrac12 \tilde R - \lambda )r^{n-2}\varphi^2\right)r^{-n} d\mu\\
	& = \int_M \left(|\nabla \varphi|^2 +\tfrac 12 R \varphi^2 + \left(\frac n2 \left(n  - \frac{n+2}{2}|\nabla r|^2\right )-\lambda\right)r^{-2}\varphi^2\right)d\mu\\
	& \ge \int_M \left(|\nabla \varphi|^2 + \tfrac 12 R \varphi^2 +  \left(\frac{n(n-2)}{4} - \lambda\right)\right) d\mu.
\end{align*}

By the (traced) Gauss equations, minimality of $M$ implies that $|A_{M}|^2 = -R_g$. Thus, we can use stability of $M$ to conclude
\[ \int_M \left( |\nabla \varphi|^2 - |A|^2 \varphi^2\right) d\mu \ge 0 \quad\Rightarrow \quad \int_M \left( |\nabla \varphi|^2 + \frac12 R \varphi^2\right) d\mu \ge 0,\]
for all $\varphi\in C_0^1(M)$. Note that we have used the fact that the scalar curvature of a minimal hypersurface in $\RR^{n+1}$ has $R\le 0$ and that $\tfrac12 <1$. In particular, choosing $\lambda = \frac{n(n-2)}{4}$ above, we find that $\cQ(\varphi)\ge 0$ for any $\varphi \in C_0^1 (M\setminus \{0\})$. Using \cite[Theorem 1]{Fischer-ColbrieSchoen1980structure}, there exists $u\in C^\infty(M\setminus \{0\})$, $u>0$ in the interior of $M\setminus \{0\}$, such that
\begin{equation}\label{eq.spectral.minimal}
	\tilde \Delta u \le -\frac12 \left(\frac{n(n-2)}{2} - \tilde R\right)u.
\end{equation}
We note that \eqref{eq.spectral.minimal} is an integral form of \emph{strictly positive} scalar curvature. 

In the second step, we restrict to the case of $n=3$. We use warped $\mu$-bubbles to derive geometric inequalities for $3$-manifolds $(N^3,g)$ admitting a positive function $u$ with \eqref{eq.spectral.minimal}. 
\begin{rema}\label{rema:mu-bub}
The $\mu$-bubble technique was first used by Gromov \cite[Section 5$\tfrac56$]{Gromov1996positive} (see also \cite{Gromov2018metric}). Warped $\mu$-bubbles have previously been combined with minimal hypersurface techniques to study problems in scalar curvature and in minimal surfaces, see, e.g. \cite{ChodoshLi2020bubble,Gromov2020aspherical,ChodoshLiLiokumovich2021,ChodoshLiStryker2022stableminimal,Zhu:width,Zhu:rigidity}. Precisely, suppose $n=3$ and $\partial N\ne\emptyset$. Then there exists an open set $\Omega$ containing $\partial N$, $\Omega\subset B_{\frac{10\pi}{\sqrt 3}}(\partial N)$, such that each connected component of $\partial \Omega\setminus \partial N$ is a $2$-sphere with area at most $\frac{32\pi}{3}$ and intrinsic diameter at most $\frac{4\pi}{\sqrt 3}$ (see Lemma \ref{lemm:mu.bubble}). 
\end{rema}
Fix $\rho>0$. By \cite{CaoShenZhu1997end}, $M\setminus B_M\left(0,e^{\frac{10\pi}{\sqrt 3}}\rho \right)$ has only one unbounded component $E$. Denote by $M'=M\setminus E$. We apply Remark \ref{rema:mu-bub} to $N=M'$, and find $M_0\subset M'$ with $\dist_{ \tilde g} (\partial M_0, \partial M')\le \frac{10\pi}{\sqrt 3}$. The topological assumptions on $M$ force $\partial M_0$ to be connected, so $|\partial M_0|_{\tilde g}\le \tfrac{32\pi}{3}$ and $\partial M_{0}$ has intrinsic diameter $\leq \frac{4\pi}{\sqrt{3}}$. By comparing $g$-distance with $\tilde g$-distance (see (2) in Lemma \ref{lemm.compare.distance}), we find that 
\[
B_{M}(0,\rho) \subset M_{0} \subset B_{M}(0,e^{\frac{10\pi}{\sqrt{3}}} \rho). 
\]
In particular, bounding intrinsic distance by extrinsic distance, we see that $\sup_{\partial M_{0}} r(x) \leq e^{\frac{10\pi}{\sqrt{3}}} \rho$. Thus, we have
\[
|B_{M}(0,\rho)| \le | M_0| \le \frac{1}{6\sqrt \pi} |\partial M_0|_g^{\frac 32}  \le \frac{1}{6\sqrt \pi} (e^{\frac{10\pi}{\sqrt{3}}}\rho)^3 |\partial M_0|_{\tilde g}^{\frac 32} \le \left(\frac{32\pi}{3}\right)^{\frac 32}\frac{e^{\frac{30\pi}{\sqrt 3}}}{6\sqrt \pi}\rho^3,
\]
where in the second step we have used the isoperimetric inequality for minimal hypersurfaces in Euclidean spaces due to  Brendle \cite{Brendle2021isoperimetric} (cf.\ \cite{MS:sob}). This completes the proof. 
\end{proof}

We now consider the requisite changes needed to prove the local result:
\begin{proof}[Proof of Theorem \ref{theo.minimal-local}]
In the case where $M$ is properly immersed in $B_1(0)\subset\RR^4$, we proceed similarly as before, and obtain a region $M'$ such that $\dist_{ \tilde g} (\partial M',\partial B_1(0))\le \frac{10\pi}{\sqrt{3}}$,  $|\partial M'|_{\tilde g}\le \frac{32\pi}{3}$, and $\partial M'$ is connected. Again, using Lemma \ref{lemm.compare.distance}, we conclude that
\[M_{\rho_0}^*\subset M',\]
where $\rho_0 = e^{-\frac{10\pi}{\sqrt 3}}$, and $M_{\rho_0}^*$ is the connected component of $M\cap B_{\RR^{4}}(0,\rho_{0})$ that contains $0$. Using \cite{Brendle2021isoperimetric} as above, 
\[|M_{r_0}^*| \le |M'| \le \frac{1}{6\sqrt \pi} |\partial M'|_g^{\frac 32}\le \frac{1}{6\sqrt \pi}  |\partial M'|_{\tilde g}^{\frac 32} \le \left(\frac{32\pi}{3}\right)^{\frac 32}\frac{1}{6\sqrt \pi}.\]
This completes the proof. 
\end{proof}

\section{Preliminaries on anisotropic integrands} \label{sec:prelim-aniso}

We now consider a general anisotropic elliptic integrand. For $M^{n}\to\RR^{n+1}$ two-sided immersion, we can set
\[
\bphi(M) = \int_{M}\Phi(\nu(x))\, d\mu. 
\]
In this section we discuss the first and second variation formulae, as well as some important consequences to be used later. 
\subsection{First variation}
Recall that $M$ is $\bphi$-stationary means that $\tfrac{d}{dt}\big|_{t=0} \bphi(M_{t}) = 0$ for all compactly supported variations $M_t$ fixing $\partial M$. By \eqref{eq:first-var}, \eqref{eq:HPhi}, \eqref{eq:Hphi-trace} this is equivalent to 
\[
\Div_{M}(D\Phi(\nu)) = \tr_{M}(\Psi(\nu) S_{M}) = 0 ,
\]
which we can interpret as vanishing of the $\bphi$-mean curvature. Here, $\Psi(\nu) : T\RR^{n+1}\to T\RR^{n+1}$ is defined by $\Psi(\nu): X \mapsto D^{2}\Phi(\nu)[X,\cdot]$ and $S_{M}$ is the shape operator of $M$. 

By the calculation in Section \ref{subsec:first-var-vf}, we find that if $M$ is $\bphi$-stationary, then for any compactly supported  (but not necessarily normal) vector field $X$ along $\Sigma$, we have
\begin{equation}\label{eq.1st.variation.vf}
\int_M \Phi(\nu)\Div_M X + D_{D\Phi(\nu)^T}X\cdot \nu = \int_{\partial M} \Phi(\nu) X\cdot \eta + (X\cdot\nu) D\Phi(\nu)\cdot \eta. 
\end{equation}

By plugging the position vector field into \eqref{eq.1st.variation.vf}, we obtain the following isoperimetric type inequality. 

\begin{coro}\label{coro.isop.ineq}
	Suppose $M^n\to \RR^{n+1}$ is $\bphi$-stationary, and the image of $\partial M$ is contained in $B_{\RR^{n+1}}(0,\rho)$ for some $\rho>0$. Then
	\[|M| \le \frac{\rho\|\Phi\|_{C^1(S^{n})}}{n \cdot \min_{\nu\in S^n} \Phi(\nu)} |\partial M|.\]
\end{coro}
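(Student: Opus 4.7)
The plan is to apply the vector-field form of the first variation \eqref{eq.1st.variation.vf} to the restriction of the position vector field $X(x)=x$ along $M$ (assuming $M$ has finite area and compact boundary, otherwise one takes a standard cutoff argument). The utility of this choice is that $x$ has simple divergence and covariant derivative, which makes the interior integrand proportional to $\Phi(\nu)$, while the boundary integrand is naturally controlled by the extrinsic distance to the origin.

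The two sides simplify as follows. On the interior, $\Div_M x = n$ because $M$ is $n$-dimensional, while $D_Y x = Y$ for every vector $Y$, and hence $D_{D\Phi(\nu)^T} x\cdot\nu = D\Phi(\nu)^T\cdot\nu = 0$ since $D\Phi(\nu)^T$ is tangent to $M$. Thus the left-hand side of \eqref{eq.1st.variation.vf} becomes
\[
n\int_M \Phi(\nu)\, d\mu \;\geq\; n\Bigl(\min_{\nu\in S^n}\Phi(\nu)\Bigr)\,|M|.
\]
On the boundary, the integrand can be rewritten as $x\cdot W$, where
\[
W \;:=\; \Phi(\nu)\,\eta + \bigl(D\Phi(\nu)\cdot\eta\bigr)\,\nu.
\]
Since $\eta\perp\nu$, the vector $W$ has squared norm $\Phi(\nu)^2 + (D\Phi(\nu)\cdot\eta)^2 \le \Phi(\nu)^2 + |D\Phi(\nu)|^2 \le \|\Phi\|_{C^1(S^n)}^2$ (using the $\ell^2$ convention for the $C^1$ norm as specified in the notation section). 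Combining this with the hypothesis $|x|\le\rho$ on $\partial M$, we obtain
\[
\biggl|\int_{\partial M}\Phi(\nu)\,x\cdot\eta + (x\cdot\nu)\,D\Phi(\nu)\cdot\eta\biggr| \;\leq\; \rho\,\|\Phi\|_{C^1(S^n)}\,|\partial M|.
\]
Chaining the two bounds through \eqref{eq.1st.variation.vf} and dividing by $n\min_{S^n}\Phi$ yields the claimed isoperimetric inequality.

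There is essentially no analytic obstacle here; the only mild subtlety is making the algebraic combination $\Phi(\nu)(x\cdot\eta) + (x\cdot\nu)(D\Phi(\nu)\cdot\eta) = x\cdot W$ and exploiting the orthogonality $\eta\perp\nu$ to package both $\Phi$ and $D\Phi$ into a single $\|\Phi\|_{C^1(S^n)}$ factor. If one merely estimated each of the two boundary terms separately with triangle inequality one would get the weaker bound $(\|\Phi\|_{C^0}+\|D\Phi\|_{C^0})$ in place of $\|\Phi\|_{C^1}$; recognizing the full vector $W$ is what allows a single $C^1$ norm to appear, which is presumably important for the explicit constants tracked later in the paper.
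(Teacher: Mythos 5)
Your proof is correct and is essentially the same as the paper's: you plug the position vector field into \eqref{eq.1st.variation.vf}, compute $\Div_M X = n$ and $D_{D\Phi(\nu)^T}X\cdot\nu = 0$, and bound the boundary integrand by $\rho\,\|\Phi\|_{C^1(S^n)}$ using $\eta\perp\nu$. The paper compresses the boundary estimate into the phrase ``using $\nu,\eta$ orthonormal,'' whereas you make the orthogonal vector $W=\Phi(\nu)\eta+(D\Phi(\nu)\cdot\eta)\nu$ explicit; this is the same idea, just spelled out.
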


\begin{proof}
	Recall that $r(x) = \dist_{\RR^{n+1}}(x,0)$. Plug $X = \sum_{i=1}^{n+1} x_i e_i$, the position vector field in $\RR^{n+1}$, into \eqref{eq.1st.variation.vf}. Then $\Div_M X = n$, and
	\[D_{D\Phi(\nu)^T} X \cdot \nu = \sum_i (D_{D\Phi(\nu)^T} x_i) (e_i\cdot \nu) = \sum_i (D\Phi(\nu)^T \cdot e_i) (e_i\cdot \nu) = D\Phi(\nu)^T \cdot \nu = 0.\]
	On the other hand, $|X(x)|\le r(x)$. Thus, we find (using $\nu,\eta$ orthonormal) 
	\[\int_M n \Phi(\nu) \le \int_{\partial M} \|\Phi\|_{C^1(S^n)} |X|\le \rho\|\Phi\|_{C^1(S^n)} |\partial M|.\]
	This completes the proof. 
\end{proof}

The next lemma generalizes the traced Gauss equation $R = - |A|^{2}$ (valid for minimal hypersurfaces) to the case of $\bphi$-stationary hypersurfaces in $\RR^{4}$, under the assumption that $D^{2}\Phi(\nu)$ is sufficiently pinched. 
\begin{lemm}\label{lemm:compare.A.and.R}
	Suppose $\Phi$ satisfies \eqref{assumption.phi} and $M^{3}\to\RR^{4}$ is $\bphi$-stationary. Then at each point on $M$, the induced scalar curvature satisfies $R \le 0$ and 
	\begin{equation}\label{eq.compare.A.and.R}
		-R \le |A|^2 \le -c_0R,
	\end{equation} 
	where 
	\[c_0 = \frac{1}{\sqrt 2 - \frac 12} \approx 1.09 \]
\end{lemm}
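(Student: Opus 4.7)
The plan is to reduce everything to the traced Gauss equation $R = H^2 - |A|^2$ for a hypersurface in $\RR^{n+1}$, where $H$ is the mean curvature. Then both inequalities in \eqref{eq.compare.A.and.R} become pointwise statements comparing $H^2$ with $|A|^2$: the lower bound $-R \le |A|^2$ reduces to $H^2 \ge 0$, which is trivial, while the upper bound $|A|^2 \le -c_0 R$ is equivalent to $H^2 \le (1-c_0^{-1})|A|^2 = (\tfrac 32 - \sqrt 2)|A|^2$. This single pointwise inequality also forces $H^2 \le |A|^2$ and so gives $R \le 0$. Consequently, the whole lemma reduces to proving
\[
H^2 \le \bigl(\tfrac 32 - \sqrt 2\bigr)|A|^2
\]
at every point of $M$.

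To prove this, I would unpack the $\bphi$-stationary equation in its trace form $\tr_M(\Psi(\nu) S_M) = 0$. Fix $p \in M$ and choose an orthonormal frame $\{e_1,e_2,e_3\}$ of $T_pM$ that diagonalizes the symmetric operator $\Psi(\nu)|_{T_pM}$, with eigenvalues $\lambda_1,\lambda_2,\lambda_3$. Hypothesis \eqref{assumption.phi} places every $\lambda_i$ in $[1,\sqrt 2]$. Writing $a_i := S_{ii}$ in this basis, the stationarity identity becomes the linear constraint $\sum_i \lambda_i a_i = 0$, while $H = \sum_i a_i$ and
\[
|A|^2 = \sum_{i,j} S_{ij}^2 \ge \sum_i a_i^2.
\]
Hence it suffices to prove that for any $\lambda \in [1,\sqrt 2]^3$ and any $a \in \RR^3$ with $\langle a,\lambda\rangle = 0$ one has $(\sum_i a_i)^2 \le (\tfrac 32 - \sqrt 2)\sum_i a_i^2$.

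The remaining step is a constrained Rayleigh quotient problem of exactly the flavor deferred to Appendix \ref{appendix.quadratic.forms}. Maximizing $\langle e,a\rangle^2/|a|^2$ with $e = (1,1,1)$ over $a\in\lambda^\perp\setminus\{0\}$ gives the squared norm of the orthogonal projection of $e$ onto $\lambda^\perp$, namely
\[
|P_{\lambda^\perp} e|^2 = |e|^2 - \frac{\langle e,\lambda\rangle^2}{|\lambda|^2} = 3 - \frac{(\sum_i \lambda_i)^2}{\sum_i \lambda_i^2}.
\]
Next I would minimize the right-hand side over $\lambda \in [1,\sqrt 2]^3$. An interior critical point analysis of $(\sum_i\lambda_i)^2/\sum_i\lambda_i^2$ forces all $\lambda_i$ equal (ratio $3$), so the minimum is on the boundary. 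Reducing to each face and edge by the same argument and comparing the four vertex values at $(1,1,1),(1,1,\sqrt 2),(1,\sqrt 2,\sqrt 2),(\sqrt 2,\sqrt 2,\sqrt 2)$, the minimum is attained at $(1,1,\sqrt 2)$ (up to permutation), with value $(3+2\sqrt 2)/2$. This pins down the sharp constant
\[
\frac{(\sum_i a_i)^2}{\sum_i a_i^2} \le 3 - \frac{3+2\sqrt 2}{2} = \tfrac 32 - \sqrt 2 = 1 - c_0^{-1},
\]
completing the proof upon substitution into the Gauss equation.

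The main obstacle is the boundary-of-cube optimization just described: it is elementary but deserves care because the answer must come out to exactly $\tfrac 32 - \sqrt 2$ for the stated constant $c_0 = (\sqrt 2 - \tfrac 12)^{-1}$ to be the correct (and presumably sharp) value. Presumably Appendix \ref{appendix.quadratic.forms} handles this more invariantly as a general comparison of quadratic forms on $\nu^\perp$, bypassing the ad hoc vertex-by-vertex check; but the substance of the estimate is this explicit Lagrange-type calculation together with the trivial bound $|A|^2\ge\sum_i S_{ii}^2$.
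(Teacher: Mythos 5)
Your argument is correct and lands on the right constant, but it executes the key estimate differently from the paper. Both proofs ultimately reduce, via the traced Gauss equation $R=H^{2}-|A|^{2}$, to the single pointwise inequality $H^{2}\le(\tfrac32-\sqrt2)|A|^{2}$; in the paper this is hidden in Appendix \ref{appendix.quadratic.forms}, where $Q_{1}-Q_{2}$ is precisely $H^{2}$ written in the variables $(k_{1},k_{2})$. The difference is in the choice of frame: the paper diagonalizes the shape operator $S_{M}$ (so $|A|^{2}=\sum k_{i}^{2}$ exactly, and the pinching \eqref{assumption.phi} only bounds the \emph{diagonal entries} $a_{i}=D^{2}\Phi(\nu)[e_{i},e_{i}]$ of $\Psi$), eliminates $k_{3}$ via the constraint, and compares two explicit binary quadratic forms by completing the square and Cauchy--Schwarz. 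You instead diagonalize $\Psi(\nu)|_{T_{p}M}$, accept the loss $|A|^{2}\ge\sum_{i}S_{ii}^{2}$, and recast the estimate as computing $|P_{\lambda^{\perp}}(1,1,1)|^{2}$ followed by a minimization of $(\sum\lambda_{i})^{2}/\sum\lambda_{i}^{2}$ over the cube $[1,\sqrt2]^{3}$. Your version makes the geometric content (and the extremal configuration $\lambda\propto(1,1,\sqrt2)$, matching the paper's $\alpha=\beta=2^{-1/2}$) more transparent; the paper's version avoids the boundary-of-cube case analysis entirely.

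One step of your optimization needs repair: the claim that ``reducing to each face and edge by the same argument'' leaves only the four vertices is not quite right. The ``all free coordinates equal'' criterion does dispose of interior and face-interior critical points (on the face $\lambda_{3}=\sqrt2$ it forces $\lambda_{1}=\lambda_{2}=\sqrt2$, a corner; similarly for $\lambda_{3}=1$), but edges genuinely carry an interior critical point: on the edge $\lambda_{2}=1,\ \lambda_{3}=\sqrt2$ the critical value is $\lambda_{1}=3(\sqrt2-1)\approx1.243\in(1,\sqrt2)$. This point is a local \emph{maximum} of the ratio along the edge (its value $\approx 2.943$ exceeds the vertex value $\tfrac32+\sqrt2\approx2.914$), so the minimum is still attained at $(1,1,\sqrt2)$ and your constant is correct --- but you must check the second-order behavior (or simply evaluate the edge endpoints and the edge critical point) rather than assert that no such critical point exists.
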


\begin{proof}
Recall that $\bphi$-stationarity can be written as $\tr_{M}(\Psi(\nu)S_{M}) = 0$. Diagonalizing $A_{M}$ at a given point, write $k_{i}$ for the principal curvatures of $M$ and $e_{i}$ for corresponding principal directions. Thus, $\bphi$-stationarity can be written as
\[
0 = \sum_{i=1}^{3} a_{i}k_{i}
\] 
where $a_{i} = D^{2}\Phi(\nu)[e_{i},e_{i}]$. Without loss of generality, we can assume that $a_1\le a_2\le a_3$. Note that the pinching assumption \eqref{assumption.phi} yields
\[
1 \leq a_1\le a_2\le a_3 \leq \sqrt{2}. 
\]
We have $|A|^2 = \sum k_i^2$, $R = 2\sum_{i< j} k_ik_j$. Writing $k_3= -\frac{a_1k_1 + a_2k_2}{a_3}$, we have
	\[|A|^2 = Q_1(k_1,k_2) :=\frac{a_1^2 +a_3^2}{a_3^2} k_1^2 + \frac{2a_1a_2}{a_3^2}k_1k_2 + \frac{a_2^2+a_3^2}{a_3^2} k_2^2,\]
	\[-R = Q_2(k_1,k_2):= \frac{2a_1}{a_3} k_1^2 + \frac{2(a_1+a_2-a_3)}{a_3}k_1k_2 + \frac{2a_2}{a_3} k_2^2.\]
	By the Gauss equation, we have $R + |A|^2 = H^2 \ge 0$, and hence $|A|^2 \ge -R$. Moreover, whenever $(a_1+a_2-a_3)^2<4a_1a_2$ (which is guaranteed by, for instance, $a_3<4a_1$), $Q_2$ is a positive definite quadratic form, and hence $-R$ is nonnegative. Given that $\frac{a_3}{a_1}, \frac{a_3}{a_2}\in [1,\sqrt{2}]$, \eqref{eq.compare.A.and.R} follows from Appendix \ref{appendix.quadratic.forms}. 
\end{proof}

\subsection{Second variation} 
Suppose now that $M^{n}\to\RR^{n+1}$ is $\bphi$-stationary and stable. 
In Section \ref{subsec:2nd-var} we derive the following second variation formula. 
\begin{equation}\label{eq.second.variation}
\frac{d^2}{dt^2}\bigg\vert_{t=0}\bphi(M_t) = \int_M \bangle{\nabla u, \Psi(\nu) \nabla u} - \tr_{M} \left(\Psi(\nu) S_M^2\right) u^2,
\end{equation}
where $u\nu$ is the variation vector field. Note that stability and \eqref{eq.second.variation} implies that
\begin{equation}\label{eq.almost.stability.general}
\int_M |\nabla u|^2 - \Lambda |A|^2 u^2 \ge 0
\end{equation}
for all $u\in C_c^1(M\setminus\partial M)$. Here, $\Lambda$ depends on ellipticity of $\bphi$. It is important to observe that if $\Phi$ satisfies \eqref{assumption.phi} then $\Lambda \geq \tfrac{1}{\sqrt{2}}$ and in particular
\begin{equation}\label{eq.almost.stability.root2}
\int_M |\nabla u|^2 - \frac{1}{\sqrt 2} |A|^2 u^2\ge 0
\end{equation}
for all $u\in C_c^1(M\setminus\partial M)$.

\subsection{Sobolev inequality and its consequences}\label{section.sobolev}
In this section, we assume that $n\ge 3$, $M^n$ is a two-sided $\bphi$-stationary and stable hypersurface immersed in $\RR^{n+1}$, where $\bphi$ is a general anisotropic elliptic integral. The Michael-Simon Sobolev inequality \cite{MS:sob} implies that for any $f\in C_c^1(M)$, 
\[C_n \left(\int_M |f|^{\frac{n}{n-1}} \right)^{\frac{n-1}{n}}\le \int_{M } |\nabla f| + |fH|.\]
(See also \cite{Brendle2021isoperimetric}.) 

Replacing $f$ by $f^{\frac{2(n-1)}{n-2}}$, we find:
\begin{equation}\label{eq.ms.sobolev}
	C_n \left(\int_ M |f|^{\frac{2n}{n-2}}\right)^{\frac{n-1}{n}}\le \int_ M \frac{2(n-1)}{n-2} |f|^{\frac{n}{n-2}} |\nabla f| + |f|^{\frac{2(n-1)}{n-2}} |H|.
\end{equation}
By the H\"older inequality,
\[\int_ M  |f|^{\frac{2(n-1)}{n-2}} |H|  \le \left(\int_M f^2 H^2 \right)^{\frac 12} \left( \int_M |f|^{\frac{2n}{n-2}} \right)^{\frac 12}  \]
The $\Phi$-stability inequality implies
\[\int_ M f^2 H^2 \le n\int_M f^2 |A|^2 \le C(\Phi) \int_{M } |\nabla f|^2.\]
Now we use the H\"older inequality on the first term of the right hand of \eqref{eq.ms.sobolev} and conclude the following Sobolev inequality:
\begin{equation}\label{eq.Sobolev}
	\left(\int_{M } |f|^{\frac{2n}{n-2}}\right)^{\frac{n-2}{n}}\le C(n,\Phi) \int_{M } |\nabla f|^2.
\end{equation}

\begin{coro}\label{coro:lower-vol-growth}
	Suppose $M^n\to\RR^{n+1}$ is $\bphi$-stationary and stable. Assume that $B_{M}(p,\rho)\subset M$ has compact closure. Then,
\[
|B_{M}(p,\rho/2)| \ge  C(n,\Phi) \rho^n.
\]
\end{coro}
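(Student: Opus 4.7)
The plan is to combine the Sobolev inequality \eqref{eq.Sobolev} with a doubling iteration seeded by local smoothness of the immersion at $p$. Writing $V(r):=|B_{M}(p,r)|$ and $a(r):=V(r)/r^{n}$, apply \eqref{eq.Sobolev} to a Lipschitz cutoff supported in $B_{M}(p,2r)$, equal to $1$ on $B_{M}(p,r)$ and satisfying $|\nabla f|\le 1/r$, for any $r$ with $2r\le\rho$. Since $|f|\ge\chi_{B_{M}(p,r)}$, this yields
\[
V(r)^{(n-2)/n}\le \frac{C(n,\Phi)}{r^{2}}\,V(2r),
\]
which rearranges to the doubling inequality
\[
a(2r)\ge K\,a(r)^{\alpha},\qquad K=K(n,\Phi):=\frac{1}{C(n,\Phi)\,2^{n}},\quad \alpha:=\tfrac{n-2}{n}\in(0,1).
\]

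Let $a^{\star}:=K^{1/(1-\alpha)}=K^{n/2}$ be the unique positive fixed point of the map $x\mapsto Kx^{\alpha}$. If $a(r)\ge a^{\star}$, then $a(2r)\ge K(a^{\star})^{\alpha}=a^{\star}$; if $a(r)<a^{\star}$, then $Ka(r)^{\alpha-1}>1$ and hence $a(2r)\ge a(r)$. In both cases,
\[
a(2r)\ge \min\{a(r),\,a^{\star}\},
\]
so iterating gives $a(2^{k}r_{0})\ge\min\{a(r_{0}),\,a^{\star}\}$ for every $k\ge 0$ with $2^{k}r_{0}\le\rho$.

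Since $M\to\RR^{n+1}$ is a smooth immersion, $a(r_{0})\to\omega_{n}$ as $r_{0}\to 0^{+}$, where $\omega_{n}$ denotes the volume of the unit Euclidean $n$-ball. Pick $r_{0}$ small enough (depending on $M$) that $a(r_{0})\ge\omega_{n}/2$ and $r_{0}<\rho/4$, then choose $k$ maximal with $2^{k}r_{0}\le\rho/2$, so that $2^{k}r_{0}>\rho/4$. Monotonicity of $V$ yields
\[
|B_{M}(p,\rho/2)|\ge V(2^{k}r_{0})\ge \min\{\omega_{n}/2,\,a^{\star}\}\,(2^{k}r_{0})^{n}\ge 4^{-n}\min\{\omega_{n}/2,\,a^{\star}\}\,\rho^{n},
\]
proving the claim with $C(n,\Phi):=4^{-n}\min\{\omega_{n}/2,\,a^{\star}\}$. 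The main subtlety is that this constant must be independent of the particular immersion: although the seed scale $r_{0}$ at which smoothness takes over depends on $M$, the iteration estimate is governed by the universal fixed point $a^{\star}=a^{\star}(n,\Phi)$, so the number of doubling steps required is irrelevant and only $n$ and $\Phi$ enter the final bound.
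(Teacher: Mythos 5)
Your proof is correct, and it takes a genuinely different route from the paper. The paper feeds the Sobolev inequality \eqref{eq.Sobolev} into the standard Moser iteration to get the mean value inequality $\sup_{B_{M}(p,\theta\rho)}u\le C(\rho^{-n}\int_{B_{M}(p,\rho)}u^{s})^{1/s}$ for nonnegative subharmonic $u$, and then simply takes $u\equiv 1$. You instead extract from a \emph{single} application of \eqref{eq.Sobolev} the dyadic doubling inequality $a(2r)\ge K a(r)^{\alpha}$ and close the loop with the fixed point of $x\mapsto Kx^{\alpha}$, seeded by the fact that $|B_{M}(p,r)|/r^{n}\to\omega_{n}$ as $r\to0$ for a smooth immersion. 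All the steps check out: the case analysis giving $a(2r)\ge\min\{a(r),a^{\star}\}$ is right (for $a(r)<a^{\star}=K^{1/(1-\alpha)}$ one indeed has $Ka(r)^{\alpha-1}>1$), and you correctly flag the one subtlety, namely that the scale $r_{0}$ at which the Euclidean seed kicks in depends on $M$ while the resulting constant $\min\{\omega_{n}/2,a^{\star}\}$ does not. Two minor remarks. First, your argument trades the Moser machinery for the smoothness seed; the seed is unavoidable in your setup because the $L^{2}$ doubling inequality alone is satisfied by $V\equiv 0$. (By contrast, running the same idea through the $L^{1}$ Michael--Simon inequality yields the differential inequality $V(r)^{(n-1)/n}\le CV'(r)$, which integrates from $V(0)=0$ with no seed --- this is closer to Allard's original argument cited in the paper's remark.) Second, the test functions in \eqref{eq.Sobolev} must be compactly supported in $M\setminus\partial M$; since your iteration only uses radii with $2r\le\rho/2$ and $\overline{B_{M}(p,\rho)}$ is compact, this is fine, but it is worth stating. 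Your proof also has the virtue of an explicit constant in terms of the Sobolev constant, in keeping with the paper's emphasis.
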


\begin{proof}
	For any $u\in C^1(M)$ such that $u\ge 0$ and $\Delta u\ge 0$, the Sobolev inequality \eqref{eq.Sobolev} and the standard Moser iteration implies that, for any $\theta\in (0,1)$, $s>0$, 
	\[\sup_{B_{M}(p,\theta\rho)} u \le  C(n,\theta,\Phi, s)\left(\rho^{-n} \int_{B_{M}(p,\rho)} u^s\right)^{1/s}.\]
	The result follows by taking $u=1$, $s=1$ and $\theta= \tfrac12$.
\end{proof}

\begin{rema}
	The use of Sobolev inequality for volume lower bound was first used by Allard \cite[7.5]{Allard1972first}.
\end{rema}

\begin{coro}\label{coro.infinite.volume}
	Suppose $M^n \to \RR^{n+1}$ is two-sided complete, $\bphi$-stationary and stable, and $K$ is a compact subset of $M$. Then each unbounded component of $M\setminus K$ has infinte volume.
\end{coro}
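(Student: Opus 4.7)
The plan is to reduce this to the local volume lower bound already established in Corollary \ref{coro:lower-vol-growth}. Let $E$ be an unbounded connected component of $M \setminus K$, and fix a basepoint $p_{0} \in K$. Since $K$ is compact it has finite intrinsic diameter, and since $M$ is complete, the Hopf--Rinow theorem guarantees that intrinsic balls in $M$ have compact closure and that unboundedness of $E$ forces the existence of points in $E$ of arbitrarily large intrinsic distance from $p_{0}$.

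Concretely, for each $\rho > 0$ I would pick a point $p_{\rho} \in E$ with
\[
\dist_{M}(p_{\rho}, p_{0}) \ge \rho + \diam_{M}(K) + 1.
\]
Then $B_{M}(p_{\rho}, \rho)$ is disjoint from $K$. Because $B_{M}(p_{\rho}, \rho)$ is connected and meets the connected component $E$ of $M \setminus K$, the entire ball lies in $E$. Moreover, by Hopf--Rinow, $\overline{B_{M}(p_{\rho}, \rho)}$ is compact in $M$, so the hypothesis of Corollary \ref{coro:lower-vol-growth} is satisfied. That corollary yields
\[
|E| \ge |B_{M}(p_{\rho}, \rho/2)| \ge C(n, \Phi)\, \rho^{n}.
\]
Letting $\rho \to \infty$ shows that $|E| = \infty$, which is the desired conclusion.

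The argument is essentially routine once Corollary \ref{coro:lower-vol-growth} is in hand. The only point that deserves a moment of care is the topological observation in the second step: the intrinsic ball $B_{M}(p_{\rho}, \rho)$ is connected (as a metric ball in a Riemannian manifold) and disjoint from $K$, so its containment in a single component of $M \setminus K$ is forced. There is no serious analytic obstacle, and no new estimates are required beyond those already established.
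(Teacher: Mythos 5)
Your proof is correct and follows essentially the same route as the paper: both arguments locate an arbitrarily large intrinsic ball inside the unbounded component $E$ and invoke the volume lower bound of Corollary \ref{coro:lower-vol-growth}. The paper phrases this as a contradiction (assuming $|E|<V$ and choosing $\rho$ with $C(n,\Phi)\rho^{n}>V$), while you argue directly by letting $\rho\to\infty$; this is only a cosmetic difference, and your explicit verification that the ball avoids $K$ and lies in $E$ is a fine way to justify the step the paper attributes to completeness.
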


\begin{proof}
Let $E$ be an unbounded component of $M\setminus K$. Suppose the contrary, that $|E|<V<\infty$. Choose $\rho$ such that $C(n,\Phi) \rho^n >V$. By completeness, there exists $p\in E$ such that $d_M (p,\partial E)>\rho$. Then we have
	\[V >|E| > |B_{M}(p,\rho)|> C(n,\Phi)\rho^n > V, \]
	a contradiction. This completes the proof. 
\end{proof}

Combining \eqref{eq.Sobolev} and Corollary \ref{coro.infinite.volume}, the same argument as used by Cao--Shen--Zhu \cite{CaoShenZhu1997end} implies the following result: 

\begin{coro}\label{coro:CSZ-harm-ends}
If $M^{n}\to\RR^{n+1}$ is complete, two-sided $\bphi$-stationary and stable immersion with at least two-ends, then there is a bounded non-constant harmonic function on $M$ with finite Dirichlet energy. 
\end{coro}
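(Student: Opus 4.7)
The plan is to mimic the Cao--Shen--Zhu construction (which itself is in the spirit of Li--Tam theory of ends), using as inputs the Sobolev inequality \eqref{eq.Sobolev} and the infinite-volume property of Corollary \ref{coro.infinite.volume}. Since $M$ has at least two ends, there exists a precompact domain $K \subset M$ so that $M \setminus K$ has at least two unbounded connected components $E_1, E_2$. By Corollary \ref{coro.infinite.volume}, both have infinite volume. Fix an exhaustion $K \subset \Omega_1 \subset \Omega_2 \subset \cdots$ of $M$ by precompact open sets with smooth boundary. For each $j$, split $\partial \Omega_j = (\partial \Omega_j \cap \overline{E_1}) \sqcup (\partial \Omega_j \setminus \overline{E_1})$ and let $u_j$ solve the Dirichlet problem
\[
\Delta_{M} u_j = 0 \text{ on } \Omega_j, \qquad u_j = 1 \text{ on } \partial\Omega_j \cap \overline{E_1}, \qquad u_j = 0 \text{ on } \partial\Omega_j \setminus \overline{E_1}.
\]
The maximum principle gives $0 \leq u_j \leq 1$.

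The first task is to establish a uniform bound $\int_{\Omega_j} |\nabla u_j|^2 \le C$, independent of $j$. Since $u_j$ minimizes the Dirichlet energy subject to its boundary data, it suffices to exhibit a single test function $\varphi$ on $M$ with $\varphi \equiv 1$ on $E_1$ outside a compact set, $\varphi \equiv 0$ on $E_2$ (or outside a compact set there), and $\int_M |\nabla \varphi|^2 < \infty$. The Sobolev inequality \eqref{eq.Sobolev} applied on the complement of a large intrinsic ball yields non-parabolicity of each end, which in turn provides enough capacity to build such a $\varphi$ with finite Dirichlet energy (concretely, one can take a smooth interpolation based on cut-offs of the Green's function that exists by non-parabolicity). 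Given such a uniform energy bound, local elliptic estimates plus the $L^\infty$ bound give $C^\infty_{\mathrm{loc}}$ convergence along a subsequence $u_{j_k} \to u_\infty$ on $M$, where $u_\infty$ is harmonic, bounded in $[0,1]$, and satisfies $\int_M |\nabla u_\infty|^2 \leq \liminf_{k} \int_{\Omega_{j_k}} |\nabla u_{j_k}|^2 \leq C$.

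The main obstacle is to prevent $u_\infty$ from degenerating to a constant. The argument is capacity-theoretic: for any fixed $x \in E_1$ deep in the end, one shows $u_j(x) \geq c_1 > 0$ uniformly in $j$, and symmetrically $u_j(y) \leq 1 - c_2$ for any fixed $y \in E_2$ deep in the end (after possibly relabeling). This separation of values at the two ends survives the limit, forcing $u_\infty$ to be non-constant. The lower bound on $u_j(x)$ is an estimate on the harmonic measure of $\partial\Omega_j \cap \overline{E_1}$ as seen from $x$, and is exactly the statement that the end $E_1$ is non-parabolic in a quantitative sense. This is where the Sobolev inequality \eqref{eq.Sobolev} together with $|E_i| = \infty$ enters crucially: a standard capacity argument (cf.\ \cite{CaoShenZhu1997end}) turns the Sobolev inequality into a uniform lower bound on the capacity of the end, hence on the harmonic measure, and so on $u_j(x)$. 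Once $u_\infty$ is shown to be non-constant, the proof is complete, since we have already arranged boundedness and finite Dirichlet energy.
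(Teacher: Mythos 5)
Your proposal follows exactly the route the paper intends. The paper's own ``proof'' is just the one-line citation: combining the Sobolev inequality \eqref{eq.Sobolev} with Corollary \ref{coro.infinite.volume}, one runs the Cao--Shen--Zhu / Li--Tam end-analysis. You have correctly identified the two inputs and sketched the Li--Tam construction (exhaustion, harmonic extensions with $\{0,1\}$ boundary data on the ends, $L^\infty$ bound from the maximum principle, compactness, non-degeneracy from non-parabolicity). So this is the same approach, not a genuinely different one.

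One remark on your sketch, since you flag the uniform Dirichlet energy bound as requiring a global test function $\varphi$ (equal to $1$ far out in $E_1$ and $0$ elsewhere, with $\int_M |\nabla\varphi|^2<\infty$): such a $\varphi$ need not exist exactly as described (the Li--Tam barrier $g^{(1)}$ on $E_1$ satisfies $\inf g^{(1)}=0$ but is never equal to $0$ on a neighborhood of infinity, so $1-g^{(1)}$ is not literally $1$ outside a compact set), so using it as a comparison competitor requires a little more care. A cleaner route to the uniform bound that avoids this issue entirely: integrate by parts to write $\int_{\Omega_j}|\nabla u_j|^2 = \int_{\partial\Omega_j} u_j\,\partial_\nu u_j = \int_{\partial\Omega_j\cap\overline{E_1}}\partial_\nu u_j$; then apply the divergence theorem on $\Omega_j\cap E_1$ to move the flux to the fixed compact hypersurface $\partial E_1$, where $|\nabla u_j|$ is uniformly bounded by interior elliptic estimates together with $0\le u_j\le 1$. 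Non-parabolicity of each end (which does follow from \eqref{eq.Sobolev}, via the Faber--Krahn/heat-kernel route) is then used only where it must be used: to show the limit $u_\infty$ separates the two ends, via the maximum principle comparison with the Li--Tam barriers $g_j^{(i)}$ on each end. Your capacity-theoretic description of that final step is correct in spirit.
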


\section{One-endedness}\label{sec:one-end}

Through this section we assume that $n=3$, $M^3\to \RR^4$ is $\bphi$-stationary and stable. By analyzing harmonic functions on $M$, we will show that $M$ has only one end, if $\bphi$ satisfies \eqref{assumption.phi} (following \cite{SY:harmonic.stable.minimal,CaoShenZhu1997end}). 

\begin{lemm}
	Suppose that $M^3$ is a complete, two-sided, $\bphi$-stationary and stable immersion in $\RR^4$, and $u$ is a harmonic function on $M$. Then
	\begin{equation}\label{eq.sy.harmonic}
		(\Lambda - \tfrac{1}{\sqrt 2}) \int_M \varphi^2 |A|^2 |\nabla u|^2 + \frac12 \int_M \varphi^2 |\nabla |\nabla u||^2 \le \int_M |\nabla \varphi| ^2 |\nabla u|^2, 
	\end{equation}
	for any $\varphi\in C_0^1(M)$. Here $\Lambda = \Lambda(\Phi)$ is the constant in \eqref{eq.almost.stability.general}.
\end{lemm}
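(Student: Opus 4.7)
The plan is to adapt the classical Schoen--Yau test function argument (as used in \cite{SY:harmonic.stable.minimal,CaoShenZhu1997end}) to the anisotropic setting, combining the Bochner formula, the refined Kato inequality in dimension $3$, a pointwise Ricci bound that exploits the pinching \eqref{assumption.phi}, and the stability inequality \eqref{eq.almost.stability.general}. First I would set $f = |\nabla u|$. The Bochner formula reads $f\Delta f + |\nabla f|^{2} = |\nabla^{2} u|^{2} + \Ric(\nabla u, \nabla u)$, and since $u$ is harmonic and $M^{3}$ is three-dimensional, the refined Kato inequality $|\nabla^{2} u|^{2} \ge \tfrac{3}{2}|\nabla f|^{2}$ gives the pointwise estimate
\[
f \Delta f \ge \tfrac{1}{2}|\nabla f|^{2} + \Ric(\nabla u, \nabla u).
\]

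The main step is then to prove the pointwise Ricci bound
\[
\Ric(\nabla u, \nabla u) \ge -\tfrac{1}{\sqrt{2}}\, |A|^{2} f^{2}.
\]
By the Gauss equation, $\Ric(X,X) = H A(X,X) - |A(X,\cdot)|^{2}$, so the eigenvalues of $-\Ric$ in a principal frame of $A$ are $\mu_{i} = k_{i}^{2} - H k_{i}$, and the desired bound reduces to showing $\max_{i}\mu_{i} \le \tfrac{1}{\sqrt{2}}|A|^{2}$. The $\bphi$-stationary condition gives $\sum_{i} a_{i} k_{i} = 0$ with $a_{i} \in [1,\sqrt{2}]$ by \eqref{assumption.phi}. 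Eliminating $k_{3} = -(a_{1}k_{1}+a_{2}k_{2})/a_{3}$, the functions $\mu_{i}$ and $|A|^{2}$ become explicit quadratic forms in $(k_{1},k_{2})$, and the claim is a quadratic-form comparison that can be handled by the technique of Appendix \ref{appendix.quadratic.forms}, directly parallel to the proof of Lemma \ref{lemm:compare.A.and.R}. The constant $\tfrac{1}{\sqrt{2}}$ is sharp, as one sees by taking $a_{1} = a_{2} = 1$, $a_{3} = \sqrt{2}$, $k_{1} = k_{2} = -\tfrac{1}{2}$, $k_{3} = \tfrac{1}{\sqrt{2}}$, where $\mu_{3} = \tfrac{1}{\sqrt{2}}|A|^{2}$.

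With the Ricci bound in hand, I would multiply the Bochner--Kato inequality by $\varphi^{2}$, integrate, and use integration by parts in $\int \varphi^{2} f \Delta f = -2\int \varphi f \, \nabla\varphi \cdot \nabla f - \int \varphi^{2}|\nabla f|^{2}$ to rearrange into the upper bound
\[
2 \int \varphi f \, \nabla\varphi \cdot \nabla f \;\le\; -\tfrac{3}{2}\int \varphi^{2} |\nabla f|^{2} + \tfrac{1}{\sqrt{2}} \int \varphi^{2} |A|^{2} f^{2}.
\]
Finally I plug $\psi = \varphi f$ into the $\bphi$-stability inequality \eqref{eq.almost.stability.general}, which expands to
\[
\Lambda \int \varphi^{2}|A|^{2}f^{2} \;\le\; \int |\nabla\varphi|^{2} f^{2} + 2\int \varphi f \,\nabla\varphi \cdot \nabla f + \int \varphi^{2} |\nabla f|^{2},
\]
and substitute the above bound on the cross term to obtain \eqref{eq.sy.harmonic} after rearrangement.

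The main obstacle is the algebraic Ricci bound of the second step: once the constant $\tfrac{1}{\sqrt{2}}$ is identified as the sharp upper bound for the largest eigenvalue of $-\Ric / |A|^{2}$ under the pinched $\bphi$-stationary constraint, the rest is a standard $\psi = \varphi f$ Schoen--Yau manipulation. The matching of the constant $\tfrac{1}{\sqrt{2}}$ appearing in \eqref{assumption.phi}, in the almost-stability inequality \eqref{eq.almost.stability.root2}, and in the Ricci estimate is precisely what produces the coefficient $(\Lambda - \tfrac{1}{\sqrt{2}})$ in front of $\int \varphi^{2} |A|^{2} |\nabla u|^{2}$ in the final inequality.
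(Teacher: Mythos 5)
Your proposal is correct and, in its overall architecture, is the same argument as the paper's: Bochner formula, the refined Kato inequality $|\nabla^{2}u|^{2}\ge\tfrac32|\nabla|\nabla u||^{2}$ for harmonic functions in dimension $3$, a pointwise lower bound $\Ric(\nabla u,\nabla u)\ge-\tfrac{1}{\sqrt2}|A|^{2}|\nabla u|^{2}$, and the substitution $\psi=\varphi|\nabla u|$ into \eqref{eq.almost.stability.general}; your integration-by-parts bookkeeping at the end is right and reproduces the constants in \eqref{eq.sy.harmonic} exactly. The one place you genuinely diverge is the Ricci bound, and there your route is an unnecessary detour. You treat it as a constrained quadratic-form comparison, invoking $\bphi$-stationarity ($\sum a_ik_i=0$) and the pinching \eqref{assumption.phi} to eliminate $k_3$, and you defer the resulting computation to the method of Appendix \ref{appendix.quadratic.forms}. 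The paper instead proves $\Ric(v,v)\ge-\tfrac{1}{\sqrt2}|A|^{2}$ for \emph{every} immersed hypersurface $M^{3}\subset\RR^{4}$ and every unit $v$, with no stationarity and no hypothesis on $\Phi$: in a principal frame $-\Ric(e_1,e_1)=-k_1(k_2+k_3)$, and $|A|^{2}\ge k_1^{2}+\tfrac12(k_2+k_3)^{2}\ge\sqrt2\,|k_1||k_2+k_3|$ by AM--GM gives the bound directly. Since your constrained maximization problem is a restriction of this unconstrained one, the claim you defer is true and your computation would go through, so there is no gap; but the constant $\tfrac{1}{\sqrt2}$ is the sharp \emph{unconstrained} bound (attained at $k_2=k_3$, $k_1=-\sqrt2\,k_2$, which is exactly your example), so attributing it to the interplay of the pinching and stationarity misidentifies its source and makes the lemma look less robust than it is. The elementary unconstrained argument is both shorter and clarifies that only the stability inequality, not the Ricci estimate, uses $\Lambda$ and hence \eqref{assumption.phi}.
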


\begin{proof}
Fix $p\in M$. Let $k_i$ be the principal curvatures, $e_i$ be the corresponding orthonormal principal directions diagonalizing $A_{M}$. 

We first show that for any immersed hypersurface $M^3$ in $\RR^4$, equipped with the induced metric, $p\in M$, and any unit vector $v\in T_p M$, we have
\[\Ric (v,v)\ge -\frac{1}{\sqrt 2} |A|^2.\]
Write $v=\sum y_i e_i$. Then $\sum y_i^2=1$. By the Gauss equation, we have
\[\Ric(e_i,e_j) = \sum_k \textrm{Rm}(e_i,e_k,e_k,e_j) = \sum_k \left(A(e_k,e_k)A(e_i,e_j) - A(e_i,e_k)A(e_j,e_k)\right),\]
and thus $\Ric(e_i,e_j) = 0$ when $i\ne j$, and $\Ric(e_i,e_i)= \sum_{j\ne i} A(e_i,e_i)A(e_j,e_j)$. Therefore,
\[\Ric(v,v) = \sum_i \sum_{j\ne i} A(e_j,e_j)A(e_i,e_i) y_i^2 = k_1(k_2+k_3)y_1^2 + k_2(k_3+k_1)y_2^2 + k_3(k_1+k_2)y_3^2. \]
By Cauchy-Schwarz and the AM-GM inequality,
\begin{multline*}
	k_1^2+k_2^2+k_3^2 \ge k_1^2 + \frac12 (k_2+k_3)^2 \ge -\sqrt 2 k_1(k_2+k_3) \\\Rightarrow \quad k_1(k_2+k_3)
	\ge -\frac{1}{\sqrt 2} \sum_i k_i^2 = -\frac{1}{\sqrt 2} |A|^2. 
\end{multline*}
Similarly,
\[k_2(k_3+k_1) \ge -\frac{1}{\sqrt 2} |A|^2, \quad k_3(k_1+k_2)\ge -\frac{1}{\sqrt 2} |A|^2.\]
Therefore, 
\begin{equation} \label{eq.compare.ricci.A}
\Ric(v,v)\ge -\frac{1}{\sqrt 2}|A|^2 \sum_i y_i^2 = -\frac{1}{\sqrt 2}|A|^2.
\end{equation}
Applying this to $\nabla u$, we conclude that:
\[\Ric(\nabla u, \nabla u)\ge -\frac{1}{\sqrt 2}|A_M|^2 |\nabla u|^2.\]

Since $M$ is $\Phi$-stable, \eqref{eq.almost.stability.general} yields
\[\int_M \Lambda |A|^2 \varphi^2 \le \int_M |\nabla \varphi|^2, \quad\forall \varphi\in C_0^1(M).\]
Replacing $\varphi$ by $|\nabla u|\varphi$, we have:
\begin{equation}\label{eq.sy.1}
	\begin{split}
		\int_M \varphi^2 |\nabla u|^2 |A|^2 & \le \int_M |\nabla \varphi|^2 |\nabla u|^2 + 2\int_M \left(\varphi |\nabla u| \bangle{\nabla \varphi, \nabla |\nabla u|} +  \varphi^2 |\nabla |\nabla u||^2\right)\\
		& = \int_M |\nabla \varphi|^2 |\nabla u|^2 - \int_M \varphi^2 |\nabla u| \Delta|\nabla u|.
	\end{split}
\end{equation}
By the improved Kato inequality,
\[|\nabla^{2} u|^2 \ge \frac38 |\nabla u|^{-2} |\nabla |\nabla u|^2|^2. \]
Combined with the Bochner formula and \eqref{eq.compare.ricci.A}, we have:
\begin{equation}
	\begin{split}
		\Delta |\nabla u|^2 &= 2\Ric_M (\nabla u,\nabla u) + 2 |\nabla^{2} u|^2\\
		&\ge -\sqrt 2 |A|^2 |\nabla u|^2 + \frac34 |\nabla u|^{-2} |\nabla |\nabla u|^2|^2.
	\end{split}
\end{equation}
Thus, 
\begin{equation}\label{eq.sy.2}
	\Delta |\nabla u|\ge -\frac{1}{\sqrt 2} |A|^2 |\nabla u| + \frac12 	|\nabla u|^{-1} |\nabla |\nabla u||^2. 
\end{equation}
\eqref{eq.sy.harmonic} follows from \eqref{eq.sy.1} and \eqref{eq.sy.2}.

\end{proof}

\begin{prop}\label{prop.one.ended}
	Suppose $\Phi$ satisfies \eqref{assumption.phi}. Then any complete, two-sided, $\Phi$-stable immersion $M^3$ in $\RR^4$ has only one end.
\end{prop}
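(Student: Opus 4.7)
The plan is to run the classical Schoen--Yau/Cao--Shen--Zhu argument, using \eqref{eq.sy.harmonic} as the key differential inequality and exploiting that \eqref{assumption.phi} guarantees $\Lambda \geq \tfrac{1}{\sqrt{2}}$ in \eqref{eq.almost.stability.general}.

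Suppose for contradiction that $M$ has at least two ends. By Corollary \ref{coro:CSZ-harm-ends}, there exists a bounded non-constant harmonic function $u \colon M \to \RR$ with $\int_M |\nabla u|^2 < \infty$. Fix a basepoint $p \in M$ and, for each $R > 0$, choose a cutoff $\varphi_R \in C_0^1(M)$ with $\varphi_R \equiv 1$ on $B_M(p,R)$, $\spt \varphi_R \subset B_M(p,2R)$, and $|\nabla \varphi_R| \leq 2/R$. Inserting $\varphi_R$ into \eqref{eq.sy.harmonic} and using $\Lambda - \tfrac{1}{\sqrt 2} \geq 0$, both terms on the left are non-negative, and
\[
\tfrac12 \int_{B_M(p,R)} |\nabla |\nabla u||^2 \;\leq\; \int_M |\nabla \varphi_R|^2 |\nabla u|^2 \;\leq\; \frac{4}{R^2} \int_M |\nabla u|^2.
\]
Since $\int_M |\nabla u|^2 < \infty$, the right-hand side tends to $0$ as $R \to \infty$, so $\nabla |\nabla u| \equiv 0$ on $M$. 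Hence $|\nabla u|$ is a (non-negative) constant $c$ on $M$.

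To conclude, I would observe that $M$ has infinite volume: since $M$ has (at least) two ends, removing a large compact set $K$ leaves an unbounded component, which has infinite volume by Corollary \ref{coro.infinite.volume}. Consequently the finiteness $\int_M c^2 \, d\mu = \int_M |\nabla u|^2 < \infty$ forces $c = 0$, i.e.\ $u$ is constant, contradicting the non-constancy asserted by Corollary \ref{coro:CSZ-harm-ends}.

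The main substantive ingredient is \eqref{eq.sy.harmonic} (which has already been established in the preceding lemma, and which is what makes the pinching \eqref{assumption.phi} enter), and the standard output of Cao--Shen--Zhu packaged as Corollary \ref{coro:CSZ-harm-ends}. The potential obstacle is the borderline case $\Lambda = \tfrac{1}{\sqrt 2}$, where the $|A|^2$ term in \eqref{eq.sy.harmonic} provides no slack; this is precisely why the improved Kato term $\tfrac12 \int \varphi^2 |\nabla |\nabla u||^2$ in \eqref{eq.sy.harmonic} is essential and why no further strengthening of \eqref{assumption.phi} is needed.
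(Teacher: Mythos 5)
Your proof is correct and follows essentially the same route as the paper: invoke Corollary \ref{coro:CSZ-harm-ends} to produce a finite-energy non-constant harmonic function, feed the standard cutoffs into \eqref{eq.sy.harmonic}, send $R\to\infty$ to force $|\nabla u|$ constant, and contradict Corollary \ref{coro.infinite.volume}. The only (immaterial) difference is the ordering of the final step — the paper notes $|\nabla u|>0$ and concludes $|M|<\infty$, while you note $|M|=\infty$ and conclude $|\nabla u|=0$ — and your remark about the borderline case $\Lambda=\tfrac{1}{\sqrt2}$ correctly identifies why the Kato term is what carries the argument.
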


\begin{proof}
	Suppose the contrary, that $M$ has at least two ends. Then Corollary \ref{coro:CSZ-harm-ends} implies that $M$ admits a nontrivial harmonic function $u$ with $\int_M |\nabla u|^2 \le C< \infty$. For $\rho>0$, take $\varphi\in C_c^1(M)$ such that $\varphi|_{B_M(0,\rho)}=1$, $\varphi|_{B_{M}(0,2\rho)}=0$, and $|\nabla \varphi|\le \tfrac 2\rho$. Then \eqref{eq.sy.harmonic} implies that
	\begin{equation*}
			\int_{B_{M}(0,\rho)} (\Lambda - \tfrac{1}{\sqrt 2}) |A|^2 |\nabla u|^2 + \frac 12 |\nabla |\nabla u||^2 \le \frac{4}{\rho^2}\int_M |\nabla u|^2\le \frac{4C}{\rho^2}.
	\end{equation*}
	Here $\Lambda \ge \tfrac{1}{\sqrt 2}$ by \eqref{assumption.phi}. Sending $\rho\to \infty$, we conclude that 
	\[ |\nabla |\nabla u||^2\equiv 0. \]
	In particular, this implies that $|\nabla u|$ is a constant. Since $u$ is nonconstant, we have that $|\nabla u| >0$. However, this implies that
	\[\int_{M} 1 = \frac{1}{|\nabla u|^2} \int_M |\nabla u|^2 <\infty,\]
	contradicting Corollary \ref{coro.infinite.volume}.
\end{proof}

\section{A conformal deformation of metrics}\label{section.conformal}

Take $M^3\to \RR^4$ to be $\bphi$-stable, where $\Phi$ satisfies \eqref{assumption.phi}. In this section we carry out the conformal deformation technique used by Gulliver-Lawson \cite{GulliverLawson1986singularity} on $M$.

Consider the function $r(x)=\dist_{\RR^{n+1}}(0,x)$ on $M$, and the position vector field $\vec X$. Then $\Delta \vec X = \vec H$. Thus, $\Delta (r^2) = \Delta (\sum x_i^2 )= 2\vec X\cdot \Delta X + 2|\nabla \vec X|^2 = 2 \vec X\cdot \vec H + 2n$. We find:
\[\Delta r = \frac{n}{r} + H(\hat x\cdot \nu) - \frac{|\nabla r|^2}{r},\]
here $\hat x= \frac{\vec X}{|\vec X|}$ is the normalized position vector.

Suppose that $w>0$ is a smooth function on $M^n\setminus \{0\}$. On $M\setminus \{0\}$, define $\tilde g = w^2 g$. For $\lambda \in \RR$, $\varphi\in C_c^1(M\setminus \{0\})$ consider the quadratic form
\[\cQ_w(\varphi) = \int_M \left(|\tilde \nabla \varphi|_{\tilde g}^2 + (\tfrac12 \tilde R - \lambda)\varphi^2 \right)d\tilde \mu,\]
where $\tilde \nabla$, $\tilde R$, $\tilde \mu$ are the gradient, the scalar curvature and the volume form with respect to $\tilde g$, respectively. One relates the geometric quantities in $g$ and $\tilde g$ as follows:
\[|\nabla \varphi|_g^2 = w^2 |\tilde \nabla \varphi|_{\tilde g}^2 ,\quad d\mu = w^{-n} d\tilde \mu.\]
Moreover, we have
\[w^2 \tilde R = R - 2(n-1)\Delta \log w - (n-1)(n-2) |\nabla \log w|^2.\]

Denote by $\tilde \cQ_w (\varphi) := \cQ_w(w^{\frac{2-n}{2}}\varphi)$. We compute:
\begin{equation*}
	\begin{split}
		&\tilde \cQ_w(\varphi)\\
		&=\int_M\left(w^{-2} |\nabla (w^{\frac{2-n}{2}}\varphi)|_{g}^2 + (\tfrac12 \tilde R - \lambda)w^{2-n} \varphi^2 \right)w^n d\mu\\
		&=\int_M \left(w^{n-2} |w^{\frac{2-n}{2}} \nabla \varphi - \tfrac{n-2}{2} \varphi w^{-\frac n2} \nabla w|_{g}^2 + (\tfrac12w^2 \tilde R  - w^2\lambda)\varphi^2  \right)d\mu\\
		&=\int_M \left( |\nabla \varphi - \tfrac{n-2}{2} \varphi \nabla \log w|_{g}^2 + (\tfrac12 w^2 \tilde R - w^2 \lambda)\varphi^2 \right)d\mu\\
		&=\int_M \left(|\nabla \varphi|_{g}^2 - \frac{n-2}{2} \bangle{\nabla (\varphi^2) , \nabla \log w }_g +\frac{(n-2)^2}{4} |\nabla \log w|_{g}^2\varphi^2 + (\tfrac12 w^2 \tilde R - w^2 \lambda)\varphi^2  \right)d\mu\\
		&= \int_M \left( |\nabla \varphi|_{g}^2 + \left(\frac{n-2}{2} \Delta \log w + \frac{(n-2)^2}{4}|\nabla \log w|_{g}^2  + \tfrac12 w^2 \tilde R - w^2\lambda\right)\varphi^2\right)d\mu\\
		&= \int_M \left(|\nabla \varphi|_{g}^2 + \tfrac12 R\varphi^2  - \left(\frac n2 \left(\Delta \log w + \tfrac{(n-2)}{2} |\nabla \log w|_{g}^2\right) + w^2\lambda  \right)\varphi^2  \right)d\mu.
	\end{split}
\end{equation*}
We now choose $w= r^{-1}$ on $M\setminus \{0\}$. Note that (dropping the $g$ subscript on the norm of the gradient)
\begin{equation*}
	\begin{split}
		\Delta \log	 w + \frac{n-2}{2} |\nabla \log w|^2 &= -\frac{\Delta r}{r} + \frac n2 \frac{|\nabla r|^2}{r^2}\\
														 &= -\frac {n}{r^2} - \frac{H(\hat x\cdot \nu)}{r} + \frac{n+2}{2} \frac{|\nabla r|^2}{r^2}.
	\end{split}
\end{equation*}
Therefore,
\begin{equation}
	\begin{split}
		&\tilde \cQ_w(\varphi)= \int_M \left(|\nabla \varphi|^2 + \tfrac12 R\varphi^2 + \left(\frac n2 \left(n + rH(\hat x\cdot \nu) - \tfrac{n+2}{2} |\nabla r|^2  \right) - \lambda \right)r^{-2}\varphi^2 \right)d\mu\\
		&\ge \int_M \left(|\nabla \varphi|^2 + \tfrac12 R\varphi^2 +\left(\frac n2 \left( n - \tfrac12 \beta r^2 H^2 - \tfrac{1}{2\beta} - \tfrac{n+2}{2} |\nabla r|^2\right) -\lambda \right)r^{-2}\varphi^2  \right)d\mu\\
		&=\int_M \left(|\nabla \varphi|^2 + (\tfrac12 R - \tfrac n4 \beta H^2)\varphi^2 + \left(\frac n2 \left(n - \tfrac{1}{2\beta} - \tfrac{n+2}{2} |\nabla r|^2\right) -\lambda\right) r^{-2}\varphi^2\right)d\mu,
	\end{split}
\end{equation}
for $\beta>0$ to be chosen later.

By the Gauss equation and Lemma \ref{lemm:compare.A.and.R},
\[H^2 = |A|^2 + R \le (1-c_0)R.  \]
Combining with $|\nabla r|\le 1$, we have
\begin{equation}
	\tilde\cQ_w (\varphi) \ge \int_M \left(|\nabla \varphi|^2 + (\tfrac 12 +\tfrac n4 \beta (c_0-1))R \varphi^2 +\left(\frac n2 (\tfrac{n-2}{2}-\tfrac{1}{2\beta})- \lambda \right)r^{-2}\varphi^2\right) d\mu.
\end{equation}
On the other hand, \eqref{eq.almost.stability.root2} and \eqref{eq.compare.A.and.R} imply that for every $\varphi\in C_c^1(M)$,
\[ \int_M \left( |\nabla \varphi|^2 + \frac{1}{\sqrt 2} R\varphi^2\right)d\mu\ge 0. \]
Note that $R\le 0$. Thus, by choosing
\[\beta = \frac{4(\frac{1}{\sqrt 2} - \frac12 )}{n(c_0 -1)}, \quad \lambda = \frac n2 \left(\frac{n-2}{2}- \frac{1}{2\beta}\right) = \frac n2 \left(\frac{n-2}{2} - \frac{n(c_0-1)}{8(\frac{1}{\sqrt 2} - \frac 12)}\right),\]
we have that $\tilde \cQ_w(\varphi)\ge 0 $ for all $\varphi\in C_c^1(M\setminus \{0\})$. We summarize these in the following Proposition.

\begin{prop}\label{prop.conformal.trick}
	Suppose $n\ge 3$, $(M^n,g)$ is an immersed hypersurface in $\RR^{n+1}$, $\Lambda, c_0 \in \RR$, such that:
	\[\int_M (|\nabla \varphi|^2 + \Lambda R \varphi^2 ) dV_M \ge 0,\quad \forall \varphi\in C_c^1(M),\]
	\[\Lambda > \tfrac 12, \quad c_0\ge 1, \quad |A|^2 \le -c_0 R_M.\]
	Then the conformally deformed manifold $(M\setminus \{0\}, \tilde g = r^{-1} g)$ satisfies
	\[\lambda_1 (-\tilde \Delta + \tfrac 12 \tilde R) \ge \lambda,\]
	where $\lambda = \frac n2 \left(\frac{n-2}{2} - \frac{n(c_0-1)}{8(\Lambda - \frac 12)}\right)$.
\end{prop}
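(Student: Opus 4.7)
My plan is to deploy the conformal identity for $\tilde\cQ_w(\varphi) := \cQ_w(w^{(2-n)/2}\varphi)$ with $w = r^{-1}$ that has already been derived in this section, and then to pin down the free parameters $\beta$ and $\lambda$ in terms of the assumed constants $\Lambda$ and $c_0$. Since $w^{(n-2)/2}$ is a smooth positive function on $M\setminus\{0\}$, the map $\varphi\mapsto w^{(2-n)/2}\varphi$ is a bijection of $C_c^1(M\setminus\{0\})$, so showing $\tilde\cQ_w(\varphi)\ge 0$ for every such $\varphi$ is equivalent to $\cQ_w\ge 0$, which is exactly the claimed spectral estimate $\lambda_1(-\tilde\Delta+\tfrac12\tilde R)\ge\lambda$.

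First, I would record the identity
\[\tilde\cQ_w(\varphi) = \int_M\!\Bigl(|\nabla\varphi|^2 + \tfrac12 R\,\varphi^2 + \bigl(\tfrac{n}{2}(n + rH(\hat x\cdot\nu) - \tfrac{n+2}{2}|\nabla r|^2)-\lambda\bigr) r^{-2}\varphi^2\Bigr)\,d\mu,\]
obtained from the conformal transformation law for scalar curvature together with $\Delta r = \tfrac{n}{r} + H(\hat x\cdot\nu) - \tfrac{|\nabla r|^2}{r}$. Next I would apply AM--GM, $rH(\hat x\cdot\nu)\ge -\tfrac{\beta}{2} r^2 H^2 -\tfrac{1}{2\beta}$, with a free parameter $\beta>0$, to dispose of the sign-indefinite mean-curvature term. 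The traced Gauss equation $H^2 = |A|^2+R$ combined with the hypothesis $|A|^2\le -c_0 R$ (which also forces $R\le 0$) then yields $H^2 \le (1-c_0)R$, converting $H^2\varphi^2$ into a (nonpositive) multiple of $R\,\varphi^2$; using $|\nabla r|\le 1$ to dominate the last gradient-of-$r$ term produces the schematic estimate
\[\tilde\cQ_w(\varphi) \ge \int_M |\nabla\varphi|^2 + \bigl(\tfrac12 +\tfrac{n}{4}\beta(c_0-1)\bigr)R\,\varphi^2 + \bigl(\tfrac{n}{2}(\tfrac{n-2}{2}-\tfrac{1}{2\beta}) - \lambda\bigr)r^{-2}\varphi^2 \, d\mu.\]
The optimization is now transparent: choose $\beta = \tfrac{4(\Lambda-1/2)}{n(c_0-1)}$ so that the coefficient of $R\varphi^2$ equals exactly $\Lambda$, and the hypothesized stability inequality, together with $R\le 0$, forces the first two terms to integrate to a nonnegative quantity. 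Then select $\lambda = \tfrac{n}{2}\bigl(\tfrac{n-2}{2}-\tfrac{1}{2\beta}\bigr)$, which simplifies to the formula in the statement, so as to kill the $r^{-2}\varphi^2$ coefficient.

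I expect the only delicate point is matching the scalar-curvature coefficient produced by the AM--GM/Gauss step to the coefficient $\Lambda$ supplied by stability: any larger choice of $\beta$ would push the $R$-coefficient above $\Lambda$ and, since $R\le 0$, stability could no longer absorb it; any smaller choice is wasteful and gives a worse $\lambda$. The degenerate endpoint $c_0 = 1$ is harmless because then $|A|^2\le -R$ combined with the Gauss equation gives $H\equiv 0$, rendering the AM--GM step vacuous, with $\lambda$ interpreted as the limit $\tfrac{n(n-2)}{4}$.
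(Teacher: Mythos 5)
Your proposal is correct and follows essentially the same route as the paper: the same conformal identity for $\tilde\cQ_w$ with $w=r^{-1}$, the same AM--GM absorption of the $rH(\hat x\cdot\nu)$ term, the same use of the traced Gauss equation with $|A|^2\le -c_0R$, and the identical choices $\beta = \tfrac{4(\Lambda-1/2)}{n(c_0-1)}$ and $\lambda=\tfrac n2(\tfrac{n-2}{2}-\tfrac{1}{2\beta})$. Your explicit remarks on the equivalence $\tilde\cQ_w\ge 0 \Leftrightarrow \lambda_1(-\tilde\Delta+\tfrac12\tilde R)\ge\lambda$ and on the endpoint $c_0=1$ (where $H\equiv 0$ and the AM--GM step is unnecessary) are correct minor additions that the paper leaves implicit.
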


\section{Volume estimates}\label{sec:vol}

We first recall a diameter bound for warped $\mu$-bubbles in $3$-manifolds satisfying $\lambda_1(-\Delta + \tfrac12 R)\ge \lambda >0$.

\begin{lemm}[Warped $\mu$-bubble area and diameter bound]\label{lemm:mu.bubble}
	Let $(N^3,g)$ be a $3$-manifold with compact connected boundary satisfying
	\begin{equation}\label{eq.lemm.spec}
		\lambda_1(-\Delta + \tfrac12 R) \ge \lambda >0.
	\end{equation}
	Suppose there exists $p\in N$ such that $d_N(p,\partial N)\ge \frac{5\pi}{\sqrt \lambda}$. Then there exists a connected open set $\Omega$ containing $\partial N$, $\Omega\subset B_{\frac{5\pi}{\sqrt\lambda}} (\partial N)$, such that each connected component of $\partial \Omega\setminus \partial N$ is a $2$-sphere with area at most $\tfrac{8\pi}{\lambda}$ and intrinsic diameter at most $\frac{2\pi}{\sqrt\lambda}$.
\end{lemm}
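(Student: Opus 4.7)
I would prove this by the warped $\mu$-bubble technique of Gromov (see the references in Remark~\ref{rema:mu-bub}). The strategy splits into three steps.

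First, convert the integral spectral hypothesis into a pointwise positive supersolution: by taking Dirichlet first eigenfunctions on a nested exhaustion and extracting a normalized Harnack limit (Fischer-Colbrie–Schoen style), I obtain a smooth $u>0$ on $\mathring N$ satisfying
\[
-\Delta u + \tfrac12 R u \;\ge\; \lambda u.
\]
Next, set $\rho(x)=d_N(x,\partial N)$, mollified to be smooth with $|\nabla\rho|\le 1$, and note that the slab $\{0<\rho<\tfrac{5\pi}{\sqrt\lambda}\}$ is nonempty by the hypothesis on $p$. On this slab, choose a smooth monotone profile $\bar h:(0,\tfrac{5\pi}{\sqrt\lambda})\to\RR$ with $\bar h(t)\to+\infty$ as $t\to 0^+$ and $\bar h(t)\to-\infty$ as $t\to\tfrac{5\pi}{\sqrt\lambda}^-$, constructed so that on a central subinterval the ODE inequality
\[
2\bar h'(\rho)+\bar h(\rho)^2 \;\le\; -2\lambda
\]
holds. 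The shifted tangent $\bar h(t)=-\sqrt{2\lambda}\,\tan\!\bigl(\sqrt{\lambda/2}\,(t-t_{0})\bigr)$ realizes equality on an interval of length $\pi\sqrt{2/\lambda}$, so the budget $\tfrac{5\pi}{\sqrt\lambda}\approx 15.7/\sqrt\lambda$ leaves ample room for this profile together with buffer regions forcing the blow-ups at the two ends.

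Second, minimize the warped $\mu$-bubble functional
\[
\mathcal{A}(\Omega) \;=\; \int_{\partial^{*}\Omega\cap\mathring N} u\, d\mathcal H^{2} \;-\; \int_{\Omega} \bar h(\rho)\,u\, dV_{g}
\]
over finite-perimeter sets $\Omega$ with $\partial N\subset\Omega\subset\{\rho<\tfrac{5\pi}{\sqrt\lambda}\}$. The singular behavior of $\bar h$ at both ends confines any minimizer to the interior of the slab; standard GMT (automatic regularity in ambient dimension $3$) yields a smooth minimizer. Replacing $\Omega$ by its connected component containing $\partial N$ gives the claimed open set, with free boundary $\Sigma=\partial\Omega\setminus\partial N$ sitting strictly inside the slab, so in particular $\Omega\subset B_{5\pi/\sqrt\lambda}(\partial N)$.

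Third, extract the area and diameter estimates from stability. The first variation of $\mathcal{A}$ gives the weighted prescribed-mean-curvature equation $H_{\Sigma}=\bar h(\rho)+\bangle{\nabla\log u,\nu}$. Plugging $\varphi\equiv 1$ into the second variation on each component $\Sigma_{0}$ of $\Sigma$, and combining the Gauss equation, the supersolution inequality $-\Delta u+\tfrac12 Ru\ge\lambda u$, the ODE inequality for $\bar h$, and Gauss–Bonnet $\int_{\Sigma_{0}} R_{\Sigma_{0}} = 4\pi\chi(\Sigma_{0})$, one obtains an inequality of the form
\[
4\pi\chi(\Sigma_{0}) \;\ge\; \lambda\,|\Sigma_{0}|,
\]
which forces $\chi(\Sigma_{0})=2$ (so $\Sigma_{0}\cong S^{2}$) and $|\Sigma_{0}|\le 8\pi/\lambda$. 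The same stability computation, together with a test function supported in a geodesic tube on $\Sigma_{0}$, yields a lower bound on the spectrum of the induced one-dimensional operator along any geodesic of $\Sigma_{0}$, from which a Bonnet–Myers-type ODE comparison delivers the intrinsic diameter bound $\diam(\Sigma_{0})\le 2\pi/\sqrt\lambda$.

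The main obstacle is not conceptual but numerical: tracking constants precisely enough to land on the stated $8\pi/\lambda$, $2\pi/\sqrt\lambda$, and $5\pi/\sqrt\lambda$. In particular, the $\tfrac12 R$ (rather than $R$) in the spectral hypothesis must be propagated carefully through the Gauss equation and the $\mu$-bubble second variation, and the diameter bound requires the sharp one-dimensional ODE argument (as in the references in Remark~\ref{rema:mu-bub}) rather than a naive Bonnet–Myers that would lose a factor.
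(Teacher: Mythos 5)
Your outline follows the same overall route as the paper's proof: pass to a positive supersolution $u$ of $-\Delta u+\tfrac12Ru\ge \lambda u$ via the Fischer-Colbrie--Schoen argument, mollify the distance to $\partial N$, build a profile $h$ that blows up to $\pm\infty$ at the two ends of a slab, minimize the warped $\mu$-bubble functional $\int_{\partial^*\Omega}u-\int_\Omega hu$, and then use stability plus Gauss--Bonnet for the area bound and a second one-dimensional warped comparison for the diameter bound. The paper does exactly this, outsourcing the GMT and the second-variation computation to \cite[Prop.~12, Lemma~14, Lemmas~16, 18]{ChodoshLi2020bubble}, and testing the stability inequality with $\psi=u^{-1/2}$ to arrive at $\lambda|\Sigma|\le 2\int_\Sigma K_\Sigma\le 8\pi$. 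So the approach matches.

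There is, however, a genuine error in your ODE condition for the profile. Plugging $\psi=u^{-1/2}$ into the stability inequality from \cite[Lemma~14]{ChodoshLi2020bubble} and using $\Delta u\le -\tfrac12(2\lambda-R)u$ gives, after discarding favorable terms, $\int_\Sigma K_\Sigma\ge \int_\Sigma\left(\lambda+\tfrac12 h^2+\langle\nabla h,\nu\rangle\right)$. To conclude $\int_\Sigma K_\Sigma\ge\tfrac{\lambda}{2}|\Sigma|$ one therefore needs
\[
\lambda+h^2-2|\nabla h|\ \ge\ 0,
\]
which, for $h=h(\rho)$ decreasing with $|\nabla\rho|\le 1$, reduces to $2h'+h^2+\lambda\ge 0$, i.e.\ an \emph{upper} bound on $|h'|$. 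Your condition $2\bar h'+\bar h^2\le -2\lambda$ points the opposite way and also has the wrong constant: it forces $\bar h$ to decrease \emph{at least} as fast as the tangent solution, so any $\bar h$ satisfying it blows down from $+\infty$ to $-\infty$ within time at most $\pi\sqrt{2/\lambda}<\tfrac{5\pi}{\sqrt\lambda}$, and hence cannot be defined on the whole slab; confining the ODE to a ``central subinterval'' does not repair this, since the free boundary $\Sigma$ of the minimizer is not a priori confined to that subinterval, only to the slab. In addition, your extremal profile $-\sqrt{2\lambda}\tan(\sqrt{\lambda/2}(t-t_0))$ satisfies $2\bar h'+\bar h^2=-2\lambda$, which violates the correct inequality $2h'+h^2+\lambda\ge 0$. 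What you actually want is a profile such as $h(t)=-\sqrt\lambda\,\tan\!\left(c\,\tfrac{\sqrt\lambda}{2}(t-t_0)\right)$ with $c\le 1$ (extremal interval of length $\tfrac{2\pi}{\sqrt\lambda}$, lengthened by taking $c<1$), or, as the paper does, compose a tangent with a rescaled, mollified distance so that the Lipschitz constant makes $\lambda+h^2-2|\nabla h|\ge0$ hold throughout the slab, while the $\tfrac{5\pi}{\sqrt\lambda}$ budget absorbs the loss from mollification.
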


\begin{proof}
	This is an application of estimates for the warped $\mu$-bubbles (see, e.g. \cite[Section 3]{ChodoshLi2020bubble}). Since $N$ satisfies \eqref{eq.lemm.spec}, there exists $u\in C^\infty(N)$, $u>0$ in $\mathring{N}$, such that
	\begin{equation}\label{eq.lemm.spec2}
		\Delta_N u \le -\tfrac12 (2\lambda - R_N) u.
	\end{equation}
	Take $\varphi_0\in C^\infty(M)$ to be a smoothing of $d_N(\cdot, \partial N)$ such that $|\Lip(\varphi_0)|\le 2$, and $\varphi_0 = 0$ on $\partial N$. Choose $\eps \in (0,\tfrac 12)$ such that $\eps, \frac{4}{\sqrt \lambda}\pi +2\eps$ are regular values of $\varphi_0$.  Define
	\[\varphi = \frac{\varphi_0 - \eps}{\frac{4}{\sqrt \lambda}+ \frac{\eps}{\pi}} - \frac{\pi}{2},\]
	$\Omega_1 = \{x\in N: -\tfrac{\pi}{2}<\varphi <\tfrac{\pi}{2}\}$, and $\Omega_0 = \{x\in N: -\tfrac{\pi}{2}<\varphi\le 0 \}$. We have that $|\Lip(\varphi)|<\tfrac{\sqrt{\lambda}}{2}$. In $\Omega_1$, define $h(x)= -\tfrac12 \tan(\varphi(x))$. By a direct computation, we have \begin{equation}\label{eq.lemm.h}
		\lambda + h^2 - 2|\nabla h|\ge 0.
	\end{equation}
	Minimize 
	\[\cA(\Omega) = \int_{\partial \Omega} u d\cH^2 - \int_{\Omega_1} (\chi_\Omega - \chi_{\Omega_0}) hu d\cH^3,\]
	among Caccioppoli sets $\Omega$ in $\Omega_1$ with $\Omega\Delta \Omega_0$ is compactly contained in $\Omega_1$. By \cite[Proposition 12]{ChodoshLi2020bubble}, a minimizer $\tilde\Omega$ exists and has regular boundary. We take $\Omega$ to be the connected component of $\{x\in N: 0\le \varphi_0(x)\le \eps \}\cup \tilde \Omega$ that contains $\partial N$ (in other words, we disregard any component of $\tilde \Omega$ that is disjoint from $\partial N$). We verify that $\Omega$ satisfies the conclusions of Lemma \ref{lemm:mu.bubble}. 
	
	Indeed, for any connected component $\Sigma$ of $\partial \Omega\cap \Omega_1$, the stability of $\cA$ implies \cite[Lemma 14]{ChodoshLi2020bubble}:
	\begin{multline}
		\int_\Sigma |\nabla \psi|^2 u  - \tfrac12 (R_N - \lambda - 2K_\Sigma) \psi^2 u + (\Delta_N u - \Delta_\Sigma u)\psi^2 \\
			-\tfrac 12 u^{-1} \bangle{\nabla_N u ,\nu}^2 \psi^2 - \tfrac12 (\lambda +h^2 +2\bangle{\nabla_N h,\nu})\psi^2 u \ge 0,\quad \forall \psi\in C^1(\Sigma).
	\end{multline}
	Taking $\psi=u^{-\frac12}$ and using \eqref{eq.lemm.spec2}, \eqref{eq.lemm.h}, we conclude that 
	\[\lambda |\Sigma|\le 2\int_ \Sigma K_\Sigma dA \le 8\pi\quad \Rightarrow\quad |\Sigma|\le \frac{8\pi}{\lambda}.\]
	Note that we have used Gauss--Bonnet, which also implies that $\Sigma$ is a $2$-sphere. The diameter upper bound follows from \cite[Lemma 16 and Lemma 18]{ChodoshLi2020bubble}.
\end{proof}

For the next lemma, recall that $r(x)=\dist_{\RR^{m}}(0,x)$.
\begin{lemm}\label{lemm.compare.distance} Below, $k\geq 2$ and $N^{k}$ is a compact connected manifold, possibly with boundary.  
\begin{enumerate}
\item Consider an immersion $N^{k}\to\RR^{m}\setminus\{0\}$. Consider $p,q\in N$ with $d_{\tilde g}(p,q)\le D$, where $\tilde g = r^{-2}g$ and $g$ is the induced metric on $N$. Then $r(p)\le e^Dr(q)$. 
\item Consider an immersion $\varphi : N^{k} \to \RR^{m}$ with $0 \in \varphi(N)$. Consider $p,q\in N\setminus\varphi^{-1}(0)$ with $d_{\tilde g}(p,q)\le D$. Write $g$ for the  for the induced metric on $N$ and let $\bar r(x) = d_{g}(\varphi^{-1}(0),x)$ denote the intrinsic distance on $N$. Then $\bar r(p) \leq e^{D} \bar r(q)$. 
\end{enumerate}
\end{lemm}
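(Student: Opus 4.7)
The plan is to prove both parts by the same device: bound the logarithmic derivative of the relevant distance function along a $\tilde g$-almost-minimizing curve, then integrate. The geometric content is that dividing the metric by $r^{2}$ exactly cancels the $1/r$ factor in the rate of change of $\log r$ along curves that are $1$-Lipschitz with respect to $g$.

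For part (1), fix $\varepsilon>0$ and choose a curve $\gamma\colon[0,L]\to N$ from $p$ to $q$, parametrized by $\tilde g$-arclength, with $L\le D+\varepsilon$. Since the Euclidean distance to the origin is $1$-Lipschitz on $\RR^{m}$, its pullback $r$ is $1$-Lipschitz on $(N,g)$, so $|dr(\gamma(t))/ds|\le 1$ when $\gamma$ is reparametrized by $g$-arclength $s$. Converting to the $\tilde g$-arclength parameter via $ds/d\tilde s=r$ yields
\begin{equation*}
\Bigl|\tfrac{d}{d\tilde s}\log r(\gamma(\tilde s))\Bigr|
=\tfrac{1}{r}\Bigl|\tfrac{dr}{d\tilde s}\Bigr|
=\tfrac{1}{r}\Bigl|\tfrac{dr}{ds}\Bigr|\cdot r\le 1.
\end{equation*}
Integrating from $0$ to $L$ gives $|\log r(p)-\log r(q)|\le D+\varepsilon$, and letting $\varepsilon\to 0$ gives $r(p)\le e^{D}r(q)$, as required.

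For part (2), the same scheme works provided we first compare $r$ and $\bar r$. The key inequality is $r(x)\le \bar r(x)$ for all $x\in N\setminus\varphi^{-1}(0)$: if $y\in\varphi^{-1}(0)$ and $\sigma$ is any path in $N$ from $y$ to $x$, then the Euclidean length of $\varphi\circ\sigma$ is at most the $g$-length of $\sigma$, so $r(x)=|\varphi(x)-\varphi(y)|\le\Length_{g}(\sigma)$; infimizing gives the claim. Now fix $\varepsilon>0$ and take $\gamma\colon[0,L]\to N$, parametrized by $\tilde g$-arclength and with $L\le D+\varepsilon$. The intrinsic distance $\bar r$ is $1$-Lipschitz on $(N,g)$ by the triangle inequality, so $|d\bar r/ds|\le 1$; combining with $ds/d\tilde s=r$ and $r\le\bar r$,
\begin{equation*}
\Bigl|\tfrac{d}{d\tilde s}\log\bar r(\gamma(\tilde s))\Bigr|
=\tfrac{1}{\bar r}\Bigl|\tfrac{d\bar r}{ds}\Bigr|\cdot r
\le\tfrac{r}{\bar r}\le 1.
\end{equation*}
Integrating and sending $\varepsilon\to 0$ yields $\bar r(p)\le e^{D}\bar r(q)$.

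There is no real obstacle; the only conceptual point is the comparison $r\le\bar r$ needed in part (2), which converts the mismatch between the conformal factor $r^{-2}$ of $\tilde g$ and the intrinsic function $\bar r$ into the harmless bound $r/\bar r\le 1$. Everything else is a direct change of parameter.
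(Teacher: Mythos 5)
Your proof is correct and follows essentially the same route as the paper's: bound the logarithmic derivative of $r$ (resp.\ $\bar r$) along a $\tilde g$-almost-minimizing curve using the $1$-Lipschitz property with respect to $g$, the conversion factor $ds/d\tilde s = r$, and, for part (2), the comparison $r \le \bar r$. The only cosmetic difference is that the paper phrases the pointwise bound via $\langle \nabla r, \gamma'\rangle$ and Cauchy--Schwarz rather than via arclength reparametrization.
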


\begin{proof}
	We first establish (1). Choose a curve $\gamma: [0,L]\to N$, parametrized by $\tilde g$-unit speed, connecting $p$ and $q$, such that $L\le D+\eps$. Using $|\nabla r|_g \le 1$, we compute
	\begin{align*}
		\log r(q) - \log r(p) &= \int_0^L \frac{d}{dt} \log r(\gamma(t))dt\\
							  &= \int_0^L r(\gamma(t))^{-1} g(\nabla r, \gamma'(t)) dt \\
							  &\le \int_0^L r(\gamma(t))^{-1} |\nabla r|_g |\gamma'(t)|_g dt \\
							  &\le \int_0^L r(\gamma(t))^{-1} |\gamma'(t)|_g dt\\
							  &= \int_0^L |\gamma'(t)|_{\tilde g} dt =L\le D+\eps. 
	\end{align*}
	Thus $r(q)\le e^{D+\eps} r(p)$. The result follows by sending $\eps\to 0$.

For (2), we begin by noting that $|\nabla \bar r|_g=1$ and $r(x)\le \bar r(x)$ for any $x\in N$. Thus, arguing as above
	\[\log \bar r(q) - \log \bar r(p) \le \int_0^L \bar r(\gamma(t))^{-1} |\gamma'(t)|_g dt \le \int_0^L r(\gamma(t))^{-1} |\gamma'(t)|_g dt = L.\]
The proof is completed as above. 
\end{proof}

\begin{proof}[Proof of Theorem \ref{theo:main.global}]
	Let $r=\dist_{\RR^{4}}(\cdot, 0)$ and $\bar r= \dist_{M,g}(\cdot, 0)$, and consider $\tilde g = r^{-2}g$. Fix $\rho>0$, and consider the geodesic ball $B_{M}(0,e^{\frac{5\pi}{\sqrt{\lambda}}} \rho)$. By Proposition \ref{prop.one.ended}, $M\setminus B_{M}(0,e^{\frac{5\pi}{\sqrt \lambda}} \rho)$ has only one unbounded connected component $E$. Denote by $M' = M\setminus E$. 
	We claim that $\partial M'=\partial E$ is connected. Indeed, since $M'$ and $E$ are both connected, if $\partial M'$ has more than one connected components, then one can find a loop in $M$ intersecting one component of $\partial M'$ exactly once, contradicting that $M$ is simply connected. Applying Lemma \ref{lemm:mu.bubble} to $(M'\setminus \{0\},\tilde g)$, we find a connected open set $\Omega$ in the $\frac{5\pi}{\sqrt \lambda}$ neighborhood of $\partial M'$, such that each connected component of $\partial \Omega\setminus \partial M'$ has area bounded by $\frac{8\pi}{\lambda}$ and diameter bounded by $\frac{2\pi}{\sqrt \lambda}$ (we emphasize here that the distance, area and diameter are with respect to $\tilde g$). Let $M_0$ be the connected component of $ M'\setminus \Omega$ that contains $0$.
	
	We make a few observations about $M_0$. First, we claim that $M\setminus M_0$ is connected. To see this, let $M_1$ be the union of connected components of $M'\setminus \Omega$ other than $M_0$. Then $M\setminus M_0 = M_1\cup \Omega\cup E$. Note that each connected component of $M_1$ share a common boundary with $\Omega$. Since $\Omega$ is connected, so is $M_1\cup \Omega$. Next, we claim that $M_0$ has only one boundary component: otherwise, since both $M_0$ and $M\setminus M_0$ are connected, as before we can find  a loop  in $M$ intersecting a connected component of $\partial M_0$ exactly once, contradicting that $M$ is simply connected.
	
	Denote by $\Sigma= \partial M_0$. By (2) in Lemma \ref{lemm.compare.distance}, $\min_{x\in \Sigma} \bar r(x)\ge \rho$. Since $B_M(0,\rho)$ is connected, this implies that $B_{M}(0,\rho) \subset M_0$. On the other hand, by comparing intrinsic to extrinsic distance, we see that $\max_{x \in \Sigma} r(x) \leq e^{\frac{5\pi}{\sqrt{\lambda}}} \rho$, so
	\[|\Sigma|_g =\int_\Sigma d\mu = \int_\Sigma r^2 d\tilde \mu \le e^{\frac{10\pi}{\sqrt{\lambda}}} \rho^2 |\Sigma|_{\tilde g}\le \frac{8\pi}{\lambda} e^{\frac{10\pi}{\sqrt{\lambda}}} \rho^2. \]
Thus, Corollary \ref{coro.isop.ineq} implies that
	\begin{equation*}
		|B_{M}(0,\rho)|_{g} \le |M_0|_{g} \le \frac{\|\Phi\|_{C^1}}{3\min_{\nu \in S^3} \Phi(\nu)} e^{\frac{5\pi}{\sqrt{\lambda}}} \rho |\partial M_0|_{g}
			 \le 
			 \frac{8\pi e^{\frac{15\pi}{\lambda}} \|\Phi\|_{C^1}}{3\lambda \min_{\nu \in S^3} \Phi(\nu)} \rho^3.
	\end{equation*}
	This proves the first part of the assertion. 
	
	Now consider a connected component $\Sigma_0$ of $\partial B_{M}(0,\rho)$, and let $E$ be the connected component of $M \setminus B_{M}(0,\rho)$ such that $\partial E$ contains $\Sigma_0$. Since $M$ is simply connected, we must have that $\partial E = \Sigma_0$. Apply Lemma \ref{lemm:mu.bubble} to $M\setminus E$, and obtain a  connected surface $\Sigma$ such that $\dist_{ \tilde g} (\Sigma_0,\Sigma)\le \frac{5\pi}{\sqrt \lambda}$, and $\diam_{\tilde g}(\Sigma)\le \frac{2\pi}{\sqrt \lambda}$. (The proof that $\Sigma$ is connected follows a similar argument as used above.) By the triangle inequality, we have that $\diam_{\tilde g} (\Sigma_0)\le \frac{7\pi}{\sqrt \lambda}$. Thus, Lemma \ref{lemm.compare.distance} implies that
	\[\max_{x\in \Sigma_0} r(x)\le e^{\frac{7\pi}{\sqrt \lambda}} \min_{x\in \Sigma_0}r(x).\]
This proves the assertion. 	
\end{proof}

\begin{proof}[Proof of Theorem \ref{theo:main.local}]
	The proof is very similar to that of Theorem \ref{theo:main.global}. We apply Lemma \ref{lemm:mu.bubble} to $(M\setminus \{0\}, \tilde g = r^{-2}g)$ and find a region $\Omega$ in the $\tfrac{5\pi}{\sqrt \lambda}$ neighborhood of $\partial M$, such that each connected component of $\Omega\setminus \partial M$ has area bounded by $\frac{8\pi}{\lambda}$ (again, the distance and area are with respect to $\tilde g$). Let $M'$ be the connected component of $M\setminus \Omega$ that contains $\{0\}$. Then $\partial M'$ is connected.
	
	Denote by $\Sigma = \partial M'$, and $\rho_0 = e^{-\frac{5\pi}{\sqrt \lambda}}$. By (1) in Lemma \ref{lemm.compare.distance}, $\min_{x\in \Omega} r(x)\ge \rho_0$. In particular, this implies that $M_{r_0}^*\subset M'$. We have
	\[|\Sigma|_{g} = \int_{\Sigma} d\mu = \int_\Sigma r^2 d\tilde \mu\le |\Sigma|_{\tilde g} \le \frac{8\pi}{\lambda}. \]
	Therefore, Corollary \ref{coro.isop.ineq} implies that
	\[|M_{\rho_0}^*|_{g} \le |M'| \le \frac{\|\Phi\|_{C^1}}{3\min_{\nu\in S^3} \Phi(\nu)} |\Sigma|_g \le \frac{8\pi  \|\Phi\|_{C^1}}{3\lambda\min_{\nu\in S^3}\Phi(\nu)}.\]
This completes the proof. 
\end{proof}

\begin{rema}
	In the more general case where we do not assume that $M$ is simply connected or has one end (or boundary component), similar proofs work out. The only modification here is that $\partial M_0$ in the proof of Theorem \ref{theo:main.global} (or $\partial M'$ in the proof of Theorem \ref{theo:main.local}) has connected components bounded by $b_1(M) + E$, where $E$ is the number of ends if $M$ is complete, and is the number of boundary components if $M\subset B_1(0)$. Thus, we have
	\[|B_{M,R}(0)|\le V_0 (b_1(M) + E),\]
	if $M$ is complete, and 
	\[|M_{\rho_0}^*| \le V_1 (b_1(M) + E),\]
	if $M\subset B_1(0)$.
\end{rema}

\appendix

\section{First and second variation}\label{app:var}
We derive first and second variations of $\bphi$ with emphasis on our geometric applications (see also \cite[Appendix A]{DePhilippisMaggi2017dimensional} and \cite[Section 2]{Winklmann2006note}). For $M^n\to \RR^{n+1}$ a two-sided immersion, set
\[
\bphi(M) : = \int_M \Phi(\nu) 
\]
for $\Phi : \RR^{n+1}\to (0,\infty)$ an elliptic integrand. 

\subsection{First variation}

Consider a $1$-parameter family of surfaces $M_t$ with normal speed at $t=0$ given by $u \nu$  (with $u \in C^{1}_{c}(M\setminus\partial M)$). Recall that $\dot \nu = - \nabla u$. We find
\begin{align*}
	\frac{d}{dt}\bigg\vert_{t=0}\bphi(M_t) & = \int_M \left( H u \Phi(\nu) - D_{\nabla u}\Phi(\nu) \right)\\
	& = \int_M \left( H  \Phi(\nu) + \Div_{M} ( D \Phi(\nu)^T) \right) u\\
	& = \int_M \left( H  \Phi(\nu) + \Div_M (D \Phi(\nu) - (D_{\nu} \Phi(\nu)) \nu)     \right) u\\
	& = \int_M \left( H  \Phi(\nu) + \Div_M (D \Phi(\nu)) -  (D_{\nu} \Phi(\nu))H   \right) u.
\end{align*}
Now, we note that we have that $D \Phi(\nu)\cdot \nu = \Phi(\nu)$ by Euler theorem for homogeneous functions. Thus, we find that 
\begin{equation}\label{eq:first-var}
\frac{d}{dt}\bigg\vert_{t=0}\bphi(M_t)  = \int_M   \Div_M( D \Phi(\nu)) u.
\end{equation}
Thus,
\begin{equation}\label{eq:HPhi}
H_\Phi = \Div_M( D \Phi(\nu)) . 
\end{equation}
vanishes if and only if $M$ is a critical point of $\bphi$. Let us rewrite this as follows (with $\{e_{i}\}_{i=1}^{n}$ a local orthonormal frame for $M$):
\begin{align*}
	\Div_M( D \Phi(\nu)) & = \sum_{i=1}^n (D_{e_i} D \Phi(\nu)) \cdot e_i\\
	& = \sum_{i=1}^n D^2\Phi(\nu)[D_{e_i}\nu,e_i]\\
	& = \sum_{i=1}^n D^2\Phi(\nu)[S_\Sigma(e_i),e_i],
\end{align*}
for $S_M$ the shape operator of $M$. Let us define $\Psi(\nu): T\RR^{n+1}\to T\RR^{n+1}$ by $\Psi(\nu): X \mapsto D^{2}\Phi(\nu)[X,\cdot]$. (This is just the $(1,1)$-tensor associated to $D^{2}\Phi(\nu)$ via the Euclidean metric.)

Then, we find
\begin{equation}\label{eq:Hphi-trace}
H_{\Phi} = \tr_{M}(\Psi(\nu) S_{M})
\end{equation}
Note that for $\Phi(\nu) = |\nu|$, we have
\[
D\Phi(\nu) = |\nu|^{-1}\nu, \Psi(\nu) = |\nu|^{-1} \Id - |\nu|^{-3}\nu\otimes \nu^{\flat}
\]
so in particular, when $|\nu|=1$, we find $\Psi(\nu)|_{T_{p}\Sigma} = \Id_{T_{p}\Sigma}$. Thus, this recovers the usual mean curvature.

\subsection{Second variation}\label{subsec:2nd-var}
Recall the tube formula:
\[
\dot S = - \nabla^{2}u - S^{2}u
\]
(where we are regarding $\nabla^{2}u$ as a $(1,1)$-tensor via $g_{M}$). Note also that the trace of a $(1,1)$-tensor is independent of the metric. Thus, we find
\[
\dot H_{\Phi} = \tr_{M} (-\Psi(\nu)\nabla^{2}u - \Psi(\nu) S_{M}^{2} u + \Psi(\nu)' S_{M})
\]
Note that
\[
\Psi(\nu)' = - (D_{\nabla u}\Psi)(\nu)
\]
Hence,
\[
\dot H_{\Phi} = \tr_{M} (-\Psi(\nu)\nabla^{2}u - \Psi(\nu) S_{M}^{2} u -  (D_{\nabla u}\Psi)(\nu) S_{M} ).
\]
Integration on $M$ gives
\begin{equation}
\frac{d^2}{dt^2}\bigg\vert_{t=0} \bphi(M_t) = \int_M \bangle{\nabla u, \Psi(\nu) \nabla u} - \tr_{M} \left(\Psi(\nu) S_M^2\right) u^2.
\end{equation}
Thus, stability implies that
\begin{equation}
\int_M |\nabla u|^2 - \Lambda |A|^2 u^2 \ge 0,\quad \forall u\in C_c^1(M\setminus\partial M).
\end{equation}
Here $\Lambda$ depends on ellipticity of $\Phi$. In particular, if $\Phi$ satisfies \eqref{assumption.phi}, then \eqref{eq.second.variation} implies that for $\Phi$-stable surfaces $M$, we have
\begin{equation}
\int_M |\nabla u|^2 - \frac{1}{\sqrt 2} |A_M|^2 u^2\ge 0, \quad \forall u\in C_c^1(M\setminus\partial M).
\end{equation}

Note that when $\Phi(X) = |X|$, we have seen that $\Psi(Y) = |Y|^{-1}\Id - |Y|^{-3}Y \otimes Y^{\flat}$. Hence, 
\[
D_{X}\Psi(\nu) = \frac{d}{dt}\Big|_{t=0} \Psi(\nu + X) = 2 (X\cdot \nu)\Id - X\otimes \nu^{\flat}  - \nu \otimes X^{\flat}
\]
In particular, $D_{\nabla u} \Psi(\nu)|_{T_{p}M} = 0$. Thus, we recover the standard second variation formula in this case.

\subsection{First variation through vector fields} \label{subsec:first-var-vf}

We also deduce first variation formula of $\bphi$ through variations that are not necessarily normal to $M$. We compute as follows:
\begin{align*}
	& \int_M \Phi(\nu) \Div_M X \\
	& = \int_M \Phi(\nu) \Div_M X^T + \Phi(\nu) (X\cdot \nu) H\\
	& = \int_M  \Div_M(\Phi(\nu) X^T) - \nabla(\Phi(\nu))\cdot X^T + \Phi(\nu) (X\cdot \nu) H\\
	& = \int_M  \Div_M(\Phi(\nu) X^T) - D_{D\Phi(\nu)^T}\nu \cdot X^T + \Phi(\nu) (X\cdot \nu) H\\
	& = \int_M  \Div_M(\Phi(\nu) X^T) - D_{D\Phi(\nu)^T}X^T \cdot \nu + \Phi(\nu) (X\cdot \nu) H\\
	& = \int_M  \Div_M(\Phi(\nu) X^T) + D_{D\Phi(\nu)^T}(X\cdot \nu)  - D_{D\Phi(\nu)^T}X \cdot \nu + \Phi(\nu) (X\cdot \nu) H\\
	& = \int_M  \Div_M(\Phi(\nu) X^T) + \Div((X\cdot\nu) D\Phi(\nu)^T) - (X\cdot \nu) \Div_\Sigma D\Phi(\nu)^T\\
	&\qquad  - D_{D\Phi(\nu)^T}X \cdot \nu + \Phi(\nu) (X\cdot \nu) H\\
	& = \int_M   - (X\cdot \nu) \Div_M  D\Phi(\nu)^T  - D_{D\Phi(\nu)^T}X \cdot \nu + \Phi(\nu) (X\cdot \nu) H  \\
	&\qquad + \int_{\partial M} \Phi(\nu) X\cdot \eta + (X\cdot\nu) D\Phi(\nu)\cdot \eta\\
	& = \int_M   - (X\cdot \nu) \Div_M  D\Phi(\nu)    - D_{D\Phi(\nu)^T}X \cdot \nu  + \int_{\partial M} \Phi(\nu) X\cdot \eta + (X\cdot\nu) D\Phi(\nu)\cdot \eta.
\end{align*}
Thus, we find that if $H_\Phi = 0$, then 
\begin{equation}
\int_M \Phi(\nu)\Div_M X +   D_{D\Phi(\nu)^T}X \cdot \nu  = \int_{\partial M} \Phi(\nu) X\cdot \eta + (X\cdot\nu) D\Phi(\nu)\cdot \eta.
\end{equation}

\section{Some computations for quadratic forms}\label{appendix.quadratic.forms}

In this section we explicitly compute the constant $c_0$ appeared in Lemma \ref{lemm:compare.A.and.R}. The approach is elementary.

\begin{lemm}\label{lemm:quadratic.forms}
	Let $a_1\le a_2\le a_3$ be positive constants such that  $\frac{a_3}{a_1}\le \sqrt 2$.
	Consider quadratic forms
	\[Q_1(k_1,k_2) = \frac{a_1^2 + a_3^2}{a_3^2} k_1^2 + \frac{2a_1a_2}{a_3^2} k_1k_2 + \frac{a_2^2+a_3^2}{a_3^2} k_2^2,\]
	\[Q_2(k_1,k_2) = \frac{2a_1}{a_3}k_1^2 + \frac{2(a_1+a_2-a_3)}{a_3} k_1k_2 + \frac{2a_2}{a_3}k_2^2.\]
	Then we have $Q_1 \le  c_0 Q_2$, where 
	\[c_0 = \frac{1}{\sqrt 2 - \frac12 } \approx 1.09.\]
	
\end{lemm}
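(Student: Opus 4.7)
The plan is to reformulate $Q_1 \leq c_0 Q_2$ as positive semi-definiteness of the symmetric $2 \times 2$ matrix $M := c_0 M_{Q_2} - M_{Q_1}$, where $M_{Q_i}$ denotes the matrix of the form $Q_i$. Since the claim is invariant under the simultaneous rescaling $a_i \mapsto t a_i$, I would first normalize by setting $s_i := a_i / a_3 \in [1/\sqrt{2}, 1]$, reducing to a two-parameter problem. PSDness of the $2\times 2$ matrix $M$ then amounts to two elementary conditions: the two diagonal entries are non-negative, and $\det M \geq 0$.

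The diagonal entries of $M$ (up to the positive factor $a_3^{-2}$) are $2c_0 s_i - 1 - s_i^2$, so non-negativity is equivalent to $s_i + s_i^{-1} \leq 2c_0$. The function $s \mapsto s + s^{-1}$ is monotone decreasing on $[1/\sqrt{2}, 1]$ with maximum $\tfrac{3}{\sqrt{2}}$ at $s = 1/\sqrt{2}$, so this reduces to verifying $c_0 \geq \tfrac{3}{2\sqrt{2}}$, which is a short direct manipulation using $c_0 = 2/(2\sqrt{2}-1)$.

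The substantive step is the determinant inequality. After expanding and collecting terms, I expect to obtain the factorization
\begin{equation*}
a_3^2 \, \det M = (c_0 - 1)\, E(s_1, s_2), \qquad E(s_1, s_2) := -(c_0+1)(s_1^2 + s_2^2 + 1) + 2c_0(s_1 s_2 + s_1 + s_2),
\end{equation*}
so, since $c_0 > 1$, it remains to show $E \geq 0$ on the square $[1/\sqrt{2}, 1]^2$. Setting $\nabla E = 0$ yields the unique critical point $s_1 = s_2 = c_0 > 1$, which lies outside the square, so the minimum of $E$ is attained on the boundary. By symmetry in $(s_1, s_2)$ it suffices to examine the two edges $\{s_1 = 1/\sqrt{2}\}$ and $\{s_2 = 1\}$. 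On each edge, $E$ restricts to a concave (downward-opening) parabola in the remaining variable, whose minimum on $[1/\sqrt{2}, 1]$ is thus attained at an endpoint. Checking the four corners directly, the only zero occurs at $(s_1, s_2) = (1/\sqrt{2}, 1/\sqrt{2})$, where the identity $c_0(2\sqrt{2} - 1) = 2$ gives $E = 0$; the remaining three corners yield strictly positive values.

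The main obstacle is carrying out the determinant computation cleanly enough that the factorization above emerges, since the inequality is sharp at the corner $(1/\sqrt{2}, 1/\sqrt{2})$ and no cruder estimate is available. Once the factorization is in hand, the remaining verification reduces to one-variable concavity together with a finite corner check, which is routine.
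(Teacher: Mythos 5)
Your proposal is correct, and it takes a genuinely different route from the paper's. I checked the one step you left as "expected": expanding $\det\bigl(c_0 M_{Q_2}-M_{Q_1}\bigr)$ in the normalized variables $s_i=a_i/a_3$, the terms $s_1^2s_2^2$, $s_1^2s_2$, $s_1s_2^2$ all cancel and one is left with
\[
\det M \;=\; -(c_0^2-1)(s_1^2+s_2^2+1)+2c_0(c_0-1)(s_1s_2+s_1+s_2)\;=\;(c_0-1)\,E(s_1,s_2),
\]
exactly your factorization; the rest of your argument (diagonal entries via $s+s^{-1}\le 3/\sqrt2\le 2c_0$; unique critical point of $E$ at $(c_0,c_0)$ outside the square; strict concavity of the edge restrictions since the quadratic coefficient is $-(c_0+1)<0$; corner check with equality only at $(1/\sqrt2,1/\sqrt2)$) goes through as stated. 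The paper instead observes that $Q_1-Q_2=\bigl((1-\alpha)k_1+(1-\beta)k_2\bigr)^2$ is a perfect square, completes the square in $Q_1$, and applies Cauchy--Schwarz to get $Q_1-Q_2\le c_1Q_1$ with $c_1\le \tfrac32-\sqrt2$, whence $Q_1\le (1-c_1)^{-1}Q_2$. Both arguments are elementary and sharp at $a_1=a_2=a_3/\sqrt2$; yours makes the optimality of $c_0$ transparent (for any smaller constant the determinant is negative at that corner), while the paper's is shorter to write because the rank-one structure of $Q_1-Q_2$ substitutes for your determinant identity. Two cosmetic points: the prefactors $a_3^{-2}$ and $a_3^{2}$ you mention are not actually present, since $Q_1,Q_2$ are already written in the scale-invariant variables; and you prove the inequality on the full square $[1/\sqrt2,1]^2$, harmlessly discarding the constraint $s_1\le s_2$.
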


\begin{proof}
	Write $\alpha = \frac{a_1}{a_3}$, $\beta = \frac{a_2}{a_3}$, with $2^{-\frac 12}\le \alpha \le \beta\le 1$. Then
	\begin{multline}
		Q_1(k_1,k_2) = (1+\alpha^2 )k_1^2 + 2\alpha \beta k_1k_2 + (1+\beta^2) k_2^2 \\
		= (1+\alpha^2) \left(k_1 + \frac{\alpha\beta}{1+\alpha^2} k_2\right)^2 + \frac{1+\alpha^2+\beta^2}{1+\alpha^2} k_2^2.
	\end{multline}
	Under the substitution $x= k_1 + \frac{\alpha\beta}{1+\alpha^2} k_2$, $y= k_2$, we have $k_1 + k_2 + (-\alpha k_1 -\beta k_2)=(1-\alpha) x + \frac{1- \beta -\alpha\beta +\alpha^2}{1+\alpha^2} y$.
	Thus, by Cauchy-Schwartz,
	\begin{align*}
		(Q_1-Q_2)(k_1,k_2) & = (k_1+k_2-\alpha k_1 -\beta k_2)^2 \\
										& = \left((1-\alpha) x+ \frac{1 -\beta -\alpha \beta +\alpha^2}{1+\alpha^2}y\right)^2\\
										& \le c_1\left((1+\alpha^2)x^2 +\frac{1+\alpha^2 +\beta^2}{1+\alpha^2}y^2\right) = c_1 Q_1(k_1,k_2),
	\end{align*}
	where $c_1= \frac{(1-\alpha)^2}{1+\alpha^2} + \left(\frac{1-\beta -\alpha\beta +\alpha^2}{1+\alpha^2}\right)^2 \cdot\frac{1+\alpha^2}{1+\alpha^2+\beta^2}$. This gives $Q_1 \le \frac{1}{1-c_1}Q_2$. Using $2^{-\frac 12}\le \alpha\le\beta \le 1$, we have:
	\[c_1 \le \frac{(1-2^{-\frac 12})^2}{1+ \frac 12} + \left(\frac{1-2^{-\frac 12}}{1+\frac 12}\right)^2 \cdot \frac{1+ \frac 12}{2} = \frac32 - \sqrt 2.\]
	The result follows.
\end{proof}

\bibliographystyle{plain}
\bibliography{bib}

\begin{thebibliography}{10}

\bibitem{Allard1983stableelliptic}
W.~K. Allard.
\newblock An a priori estimate for the oscillation of the normal to a
  hypersurface whose first and second variation with respect to an elliptic
  integrand is controlled.
\newblock {\em Invent. Math.}, 73(2):287--331, 1983.

\bibitem{Allard1972first}
William~K. Allard.
\newblock On the first variation of a varifold.
\newblock {\em Ann. of Math. (2)}, 95:417--491, 1972.

\bibitem{Allard:characterization}
William~K. Allard.
\newblock A characterization of the area integrand.
\newblock In {\em Symposia {M}athematica, {V}ol. {XIV} ({C}onvegno di {T}eoria
  {G}eometrica dell'{I}ntegrazione e {V}ariet\`a {M}inimali, {INDAM}, {R}ome,
  1973)}, pages 429--444. 1974.

\bibitem{BDG}
E.~Bombieri, E.~De~Giorgi, and E.~Giusti.
\newblock Minimal cones and the {B}ernstein problem.
\newblock {\em Invent. Math.}, 7:243--268, 1969.

\bibitem{Brendle2021isoperimetric}
Simon Brendle.
\newblock The isoperimetric inequality for a minimal submanifold in {E}uclidean
  space.
\newblock {\em J. Amer. Math. Soc.}, 34(2):595--603, 2021.

\bibitem{CaoShenZhu1997end}
Huai-Dong Cao, Ying Shen, and Shunhui Zhu.
\newblock The structure of stable minimal hypersurfaces in {${\bf R}^{n+1}$}.
\newblock {\em Math. Res. Lett.}, 4(5):637--644, 1997.

\bibitem{Cerf2006wulff}
Rapha\"el Cerf.
\newblock {\em The {W}ulff crystal in {I}sing and percolation models}, volume
  1878 of {\em Lecture Notes in Mathematics}.
\newblock Springer-Verlag, Berlin, 2006.
\newblock Lectures from the 34th Summer School on Probability Theory held in
  Saint-Flour, July 6--24, 2004, With a foreword by Jean Picard.

\bibitem{ChodoshLi2020bubble}
Otis Chodosh and Chao Li.
\newblock Generalized soap bubbles and the topology of manifolds with positive
  scalar curvature.
\newblock {\em \url{https://arxiv.org/abs/2008.11888}}, 2020.

\bibitem{ChodoshLi:R4}
Otis Chodosh and Chao Li.
\newblock Stable minimal hypersurfaces in $\mathbf{R}^4$.
\newblock {\em \url{https://arxiv.org/abs/2108.11462}}, 2021.

\bibitem{ChodoshLiLiokumovich2021}
Otis Chodosh, Chao Li, and Yevgeny Liokumovich.
\newblock Classifying sufficiently connected psc manifolds in 4 and 5
  dimensions.
\newblock {\em Geom. Topo.}, to appear.

\bibitem{ChodoshLiStryker2022stableminimal}
Otis Chodosh, Chao Li, and Douglas Stryker.
\newblock Complete stable minimal hypersurfaces in positively curved
  4-manifolds.
\newblock {\em \url{https://arxiv.org/pdf/2202.07708}}, 2022.

\bibitem{CSV:nonlocal}
Eleonora Cinti, Joaquim Serra, and Enrico Valdinoci.
\newblock Quantitative flatness results and {$BV$}-estimates for stable
  nonlocal minimal surfaces.
\newblock {\em J. Differential Geom.}, 112(3):447--504, 2019.

\bibitem{DPDR:minmax}
Guido De~Philippis and Antonio De~Rosa.
\newblock The anisotropic min-max theory: Existence of anisotropic minimal and
  {CMC} surfaces.
\newblock {\em \url{https://arxiv.org/abs/2205.12931}}, 2022.

\bibitem{DPDRG:rect}
Guido De~Philippis, Antonio De~Rosa, and Francesco Ghiraldin.
\newblock Rectifiability of varifolds with locally bounded first variation with
  respect to anisotropic surface energies.
\newblock {\em Comm. Pure Appl. Math.}, 71(6):1123--1148, 2018.

\bibitem{DPDRH:area-blowup}
Guido De~Philippis, Antonio De~Rosa, and Jonas Hirsch.
\newblock The area blow up set for bounded mean curvature submanifolds with
  respect to elliptic surface energy functionals.
\newblock {\em Discrete Contin. Dyn. Syst.}, 39(12):7031--7056, 2019.

\bibitem{DePhilippisMaggi2017dimensional}
Guido De~Philippis and Francesco Maggi.
\newblock Dimensional estimates for singular sets in geometric variational
  problems with free boundaries.
\newblock {\em J. Reine Angew. Math.}, 725:217--234, 2017.

\bibitem{docarmo-peng}
M.~do~Carmo and C.~K. Peng.
\newblock Stable complete minimal surfaces in {${\bf R}^{3}$} are planes.
\newblock {\em Bull. Amer. Math. Soc. (N.S.)}, 1(6):903--906, 1979.

\bibitem{Federer}
Herbert Federer.
\newblock {\em Geometric measure theory}.
\newblock Die Grundlehren der mathematischen Wissenschaften, Band 153.
  Springer-Verlag New York, Inc., New York, 1969.

\bibitem{Figalli:reg-c1alpha}
Alessio Figalli.
\newblock Regularity of codimension-1 minimizing currents under minimal
  assumptions on the integrand.
\newblock {\em J. Differential Geom.}, 106(3):371--391, 2017.

\bibitem{FigalliSerra:nonlocalAC}
Alessio Figalli and Joaquim Serra.
\newblock On stable solutions for boundary reactions: a {D}e {G}iorgi-type
  result in dimension {$4+1$}.
\newblock {\em Invent. Math.}, 219(1):153--177, 2020.

\bibitem{Fischer-ColbrieSchoen1980structure}
Doris Fischer-Colbrie and Richard Schoen.
\newblock The structure of complete stable minimal surfaces in {$3$}-manifolds
  of nonnegative scalar curvature.
\newblock {\em Comm. Pure Appl. Math.}, 33(2):199--211, 1980.

\bibitem{CMR}
Alberto~Roncoroni Giovanni~Catino, Paolo~Mastrolia.
\newblock Two rigidity results for stable minimal hypersurfaces.
\newblock {\em \url{https://arxiv.org/abs/2209.10500}}, 2022.

\bibitem{Gromov1996positive}
Misha Gromov.
\newblock Positive curvature, macroscopic dimension, spectral gaps and higher
  signatures.
\newblock In {\em Functional analysis on the eve of the 21st century, {V}ol.
  {II} ({N}ew {B}runswick, {NJ}, 1993)}, volume 132 of {\em Progr. Math.},
  pages 1--213. Birkh\"{a}user Boston, Boston, MA, 1996.

\bibitem{Gromov2018metric}
Misha Gromov.
\newblock Metric inequalities with scalar curvature.
\newblock {\em Geom. Funct. Anal.}, 28(3):645--726, 2018.

\bibitem{Gromov2020aspherical}
Misha Gromov.
\newblock No metrics with positive scalar curvatures on aspherical 5-manifolds.
\newblock {\em \url{https://arxiv.org/abs/2009.05332}}, 2020.

\bibitem{GulliverLawson1986singularity}
Robert Gulliver and H.~Blaine Lawson, Jr.
\newblock The structure of stable minimal hypersurfaces near a singularity.
\newblock In {\em Geometric measure theory and the calculus of variations
  ({A}rcata, {C}alif., 1984)}, volume~44 of {\em Proc. Sympos. Pure Math.},
  pages 213--237. Amer. Math. Soc., Providence, RI, 1986.

\bibitem{HardtSimon}
Robert Hardt and Leon Simon.
\newblock Nodal sets for solutions of elliptic equations.
\newblock {\em J. Differential Geom.}, 30(2):505--522, 1989.

\bibitem{Jenkins}
H.~B. Jenkins.
\newblock On two-dimensional variational problems in parametric form.
\newblock {\em Arch. Rational. Mech. Anal.}, 8:181--206, 1961.

\bibitem{Lin:parametric}
Fang-Hua Lin.
\newblock Estimates for surfaces which are stationary for an elliptic
  parametric integral.
\newblock {\em J. Partial Differential Equations}, 3(3):78--92, 1990.

\bibitem{MeeksYau:exist}
William~W. Meeks, III and Shing~Tung Yau.
\newblock The existence of embedded minimal surfaces and the problem of
  uniqueness.
\newblock {\em Math. Z.}, 179(2):151--168, 1982.

\bibitem{MS:sob}
J.~H. Michael and L.~M. Simon.
\newblock Sobolev and mean-value inequalities on generalized submanifolds of
  {$R^{n}$}.
\newblock {\em Comm. Pure Appl. Math.}, 26:361--379, 1973.

\bibitem{Mooney2019minimal}
Connor Mooney.
\newblock Entire solutions to equations of minimal surface type in six
  dimensions.
\newblock {\em J. Eur. Math. Soc. (JEMS)}, 24(12):4353--4361, 2022.

\bibitem{MooneyYang}
Connor Mooney and Yang Yang.
\newblock A proof by foliation that {L}awson's cones are {$A_\Phi$}-minimizing.
\newblock {\em Discrete Contin. Dyn. Syst.}, 41(11):5291--5302, 2021.

\bibitem{Morgan:cone-cliff}
Frank Morgan.
\newblock The cone over the {C}lifford torus in {${\bf R}^4$} is
  {$\Phi$}-minimizing.
\newblock {\em Math. Ann.}, 289(2):341--354, 1991.

\bibitem{MunteanuSungWang}
Ovidiu Munteanu, Chiung-Jue~Anna Sung, and Jiaping Wang.
\newblock Area and {S}pectrum {E}stimates for {S}table {M}inimal {S}urfaces.
\newblock {\em J. Geom. Anal.}, 33(2):Paper No. 40, 2023.

\bibitem{MW:1}
Ovidiu Munteanu and Jiaping Wang.
\newblock Comparison theorems for three-dimensional manifolds with scalar
  curvature bound.
\newblock {\em to appear in Int.\ Math.\ Res.\ Not.,
  \url{https://arxiv.org/abs/2105.12103}}, 2021.

\bibitem{MW:2}
Ovidiu Munteanu and Jiaping Wang.
\newblock Comparison theorems for {3D} manifolds with scalar curvature bound,
  {II}.
\newblock {\em \url{https://arxiv.org/abs/2201.05595}}, 2022.

\bibitem{pogorelov}
Aleksei~V. Pogorelov.
\newblock On the stability of minimal surfaces.
\newblock {\em Soviet Math. Dokl.}, 24:274--276, 1981.

\bibitem{SchoenSimonAlmgren1977regularity}
R.~Schoen, L.~Simon, and F.~J. Almgren, Jr.
\newblock Regularity and singularity estimates on hypersurfaces minimizing
  parametric elliptic variational integrals. {I}, {II}.
\newblock {\em Acta Math.}, 139(3-4):217--265, 1977.

\bibitem{SSY}
R.~Schoen, L.~Simon, and S.~T. Yau.
\newblock Curvature estimates for minimal hypersurfaces.
\newblock {\em Acta Math.}, 134(3-4):275--288, 1975.

\bibitem{Schoen:estimates}
Richard Schoen.
\newblock Estimates for stable minimal surfaces in three-dimensional manifolds.
\newblock In {\em Seminar on minimal submanifolds}, volume 103 of {\em Ann. of
  Math. Stud.}, pages 111--126. Princeton Univ. Press, Princeton, NJ, 1983.

\bibitem{SchoenSimon}
Richard Schoen and Leon Simon.
\newblock Regularity of stable minimal hypersurfaces.
\newblock {\em Comm. Pure Appl. Math.}, 34(6):741--797, 1981.

\bibitem{SS:par-ell}
Richard Schoen and Leon Simon.
\newblock A new proof of the regularity theorem for rectifiable currents which
  minimize parametric elliptic functionals.
\newblock {\em Indiana Univ. Math. J.}, 31(3):415--434, 1982.

\bibitem{SY:harmonic.stable.minimal}
Richard Schoen and Shing~Tung Yau.
\newblock Harmonic maps and the topology of stable hypersurfaces and manifolds
  with non-negative {R}icci curvature.
\newblock {\em Comment. Math. Helv.}, 51(3):333--341, 1976.

\bibitem{Simon:bern-extend}
Leon Simon.
\newblock On some extensions of {B}ernstein's theorem.
\newblock {\em Math. Z.}, 154(3):265--273, 1977.

\bibitem{Simons}
James Simons.
\newblock Minimal varieties in riemannian manifolds.
\newblock {\em Ann. of Math. (2)}, 88:62--105, 1968.

\bibitem{White1987curvature}
B.~White.
\newblock Curvature estimates and compactness theorems in {$3$}-manifolds for
  surfaces that are stationary for parametric elliptic functionals.
\newblock {\em Invent. Math.}, 88(2):243--256, 1987.

\bibitem{White:space}
Brian White.
\newblock The space of {$m$}-dimensional surfaces that are stationary for a
  parametric elliptic functional.
\newblock {\em Indiana Univ. Math. J.}, 36(3):567--602, 1987.

\bibitem{White:existence-parametric}
Brian White.
\newblock Existence of smooth embedded surfaces of prescribed genus that
  minimize parametric even elliptic functionals on {$3$}-manifolds.
\newblock {\em J. Differential Geom.}, 33(2):413--443, 1991.

\bibitem{White:notes}
Brian White.
\newblock Introduction to minimal surface theory.
\newblock In {\em Geometric analysis}, volume~22 of {\em IAS/Park City Math.
  Ser.}, pages 387--438. Amer. Math. Soc., Providence, RI, 2016.

\bibitem{Wickramasekera}
Neshan Wickramasekera.
\newblock A general regularity theory for stable codimension 1 integral
  varifolds.
\newblock {\em Ann. of Math. (2)}, 179(3):843--1007, 2014.

\bibitem{Wink:pointwise}
Sven Winklmann.
\newblock Pointwise curvature estimates for {$F$}-stable hypersurfaces.
\newblock {\em Ann. Inst. H. Poincar\'{e} C Anal. Non Lin\'{e}aire},
  22(5):543--555, 2005.

\bibitem{Winklmann2006note}
Sven Winklmann.
\newblock A note on the stability of the {W}ulff shape.
\newblock {\em Arch. Math. (Basel)}, 87(3):272--279, 2006.

\bibitem{Zhu:rigidity}
Jintian Zhu.
\newblock Rigidity results for complete manifolds of nonnegative scalar
  curvature.
\newblock {\em \url{https://arxiv.org/abs/2008.07028}}, 2020.

\bibitem{Zhu:width}
Jintian Zhu.
\newblock Width estimate and doubly warped product.
\newblock {\em Trans. Amer. Math. Soc.}, 374(2):1497--1511, 2021.

\end{thebibliography}

\end{document}